\theoremstyle{plain}
\newtheorem{thm}{Theorem}[section]
\newtheorem{theorem}[thm]{Theorem}
\newtheorem{lemma}[thm]{Lemma}
\newtheorem{proposition}[thm]{Proposition}
\theoremstyle{remark}
\theoremstyle{definition}
\newtheorem{definition}[thm]{Definition}
\def\al{{\alpha}}
\def\de{{\delta}}
\def\om{{\omega}}
\def\Om{{\Omega}}
\def\la{{\lambda}}
\def\si{{\sigma}}
\def\Si{{\Sigma}}
\def\ga{{\gamma}}
\def\ep{{\varepsilon}}
\def\phi{{\varphi}}
\let\pa\partial
\let\na\nabla
\DeclareMathAlphabet{\doba}{U}{msb}{m}{n}
\gdef\mN{\doba{N}}
\gdef\mR{\doba{R}}
\gdef\mS{\doba{S}}
\def\cU{{\mathcal U}}
\let\vol\Vol
\def\Scal{{\mathop{\rm Scal}}}
\def\Ric{{\mathop{\rm Ric}}}
\let\spec\Spec
\let\ti\tilde
\def\eref#1{{\rm (\ref{#1})}}
\def\Id{{\mathop{\mathrm{Id}}}}
\def\Riem{\mbox{\textit{Riem}}^n}
\def\Riemspin{\mbox{\textit{Riemspin}}^n}
\def\Laplace{\mbox{\textit{Laplace}}^n_k}
\def\Dirac{\mbox{\textit{Dirac}}^n_k}
\def\proofof#1{\par\medbreak\noindent {\bf Proof of \ignorespaces #1.\enspace}}
\long\def\ignorethis#1{}
\begin{document}
\title{The supremum of conformally covariant eigenvalues in a conformal class}
\author{Bernd Ammann, Pierre Jammes}

\begin{abstract}
Let $(M,g)$ be a compact Riemannian manifold of dimension $\geq 3$.
We show that there is a metrics $\tilde g$ conformal to $g$ and of volume $1$
such that the first positive eigenvalue the conformal Laplacian with 
respect to $\ti g$ is
arbitrarily large. A similar statement is proven for the first positive
eigenvalue of the Dirac operator on a spin manifold of dimension $\geq 2$.
\end{abstract}

\maketitle

\begin{center}
\today
\end{center}

\footnote{bernd.ammann at gmx.de, pierre.jammes at univ-avignon.fr}

\setcounter{tocdepth}{1}
\tableofcontents

\section{Introduction}

The goal of this article is to prove the following theorems.
\begin{theorem}\label{theo.main.dir}
Let $(M,g_0,\chi)$ be compact Riemannian spin manifold of dimension $n\geq 2$.
For any metric~$g$
in the conformal class $[g_0]$, we denote the first positive eigenvalue of
the Dirac operator on $(M,g,\chi)$ by
$\la_1^+(D_g)$.
Then
   $$\sup_{g\in [g_0]}\la_1^+(D_g)\vol(M,g)^{1/n}=\infty.$$
\end{theorem}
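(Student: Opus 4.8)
The plan is to use the conformal covariance of the Dirac operator to turn the statement into a spectral gap estimate, and then to construct the required metrics by inserting a long, thin cylinder at a point of $M$. Recall that if $\ti g=f^2g_0$ with $f>0$ smooth, the spinor bundles of $g_0$ and $\ti g$ are canonically identified and $D_{\ti g}\bigl(f^{-(n-1)/2}\psi\bigr)=f^{-(n+1)/2}\,D_{g_0}\psi$; hence $\la_1^+(D_{\ti g})$ is the smallest $\la>0$ for which $D_{g_0}\psi=\la f\psi$ admits a nonzero solution, while $\vol(M,\ti g)=\int_Mf^n\,dV_{g_0}$. Since $\la_1^+(D_{\ti g})\vol(M,\ti g)^{1/n}$ is invariant under $f\mapsto cf$, it suffices to produce metrics $g_\rho\in[g_0]$ with $\vol(M,g_\rho)=1$ and $\Spec(D_{g_\rho})\cap(0,\Lambda_\rho)=\emptyset$ for some $\Lambda_\rho\to\infty$. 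One must keep in mind that $\dim\ker D_g$ is a conformal invariant, so when $g_0$ admits harmonic spinors the whole spectrum cannot be pushed off $0$: only the nonzero part is to be controlled, and it is precisely these harmonic spinors that make the estimate delicate.

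For the construction, fix $p\in M$ and work in conformal normal coordinates, so that $g_0$ is $C^2$-close to the flat metric on $B_{2r}(p)$ for a small fixed $r$. For $\rho\to 0$ build $g_\rho=f_\rho^2g_0\in[g_0]$ so that $(M,g_\rho)$ is a union of three pieces, glued smoothly along round spheres: a rescaled copy $(M\setminus B_r(p),\rho^2\ol g_0)$, where $\ol g_0\in[g_0]$ is a fixed metric that equals $g_0$ outside $B_{2r}(p)$ and is turned conformally into a product near $\partial B_r(p)$; a straight cylinder of cross-sectional radius of order $\rho$ and length $L$ of order $\rho^{-(n-1)}$; and a spherical cap isometric to a small polar neighbourhood of a sphere of radius of order $\rho$. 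Such a $g_\rho$ lies in $[g_0]$ because a punctured ball is conformally a half-cylinder, so $f_\rho$ may be chosen to realise any profile of the cross-sectional radius along the cylindrical direction and is simply the constant $\rho$ near $\partial B_{2r}(p)$. Choosing $L$ so that $\vol(M,g_\rho)=1$ forces $L$ of order $\rho^{-(n-1)}$, and the cylinder carries almost all of the volume.

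Now estimate $\Spec(D_{g_\rho})$. On each piece $D$ has a spectral gap about $0$ of size at least $c\rho^{-1}$: on the cylinder and the cap because the round sphere $S^{n-1}$ of radius $a$ has least Dirac eigenvalue $\tfrac{n-1}{2a}$ in modulus --- the spin structure being unique for $n\ge 3$, and for $n=2$ the relevant circle $\partial B_r(p)$ bounds a disc in $M$, hence carries the bounding spin structure and $|\la|\ge\tfrac1{2a}$ still holds --- so there is no spectrum in $(-c\rho^{-1},c\rho^{-1})$ there; and on the first piece, after rescaling by $\rho^{-1}$, $\Spec\bigl(D_{(M\setminus B_r,\ol g_0)}\bigr)\setminus\{0\}$ avoids $(-c\de_0,c\de_0)$ for a fixed $\de_0>0$ (being close to $\Spec D_{g_0}$ for small $r$), apart from a cluster of exactly $h:=\dim\ker D_{g_0}$ eigenvalues tending to $0$ with $r$. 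Pick a partition of unity $\sum_i\phi_i^2=1$ subordinate to the three pieces, each $\phi_i$ making its transition inside a sub-cylinder of length a large multiple $K$ of the local cross-sectional radius --- the long cylinder leaves ample room for this --- so that $\|d\phi_i\|_\infty\le C(K\rho)^{-1}$. In the localisation identity $\|D_{g_\rho}\psi\|^2=\sum_i\|D_{g_\rho}(\phi_i\psi)\|^2-\int\bigl(\sum_i|d\phi_i|^2\bigr)|\psi|^2$ the first-order cross terms cancel since $\sum_i\phi_i\,d\phi_i=\tfrac12 d\bigl(\sum_i\phi_i^2\bigr)=0$. Hence, if $D_{g_\rho}\psi=\la\psi$ with $0<|\la|<\Lambda_\rho:=c'\rho^{-1}$ and $c'$ small, the components of $\psi$ on the cylinder and the cap have norm at most $CK^{-1}\|\psi\|$, so $\psi$ concentrates on the first piece up to $O(K^{-1})$ and restricts there to an approximate eigenspinor of $D_{(M\setminus B_r,\ol g_0)}$ with eigenvalue of size $O(1)$. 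As that operator has only $h$ eigenvalues near $0$, the space of such $\psi$ is at most $h$-dimensional; containing the $h$-dimensional $\ker D_{g_\rho}$, it equals it. Therefore $\Spec(D_{g_\rho})\cap(0,\Lambda_\rho)=\emptyset$, and $\la_1^+(D_{g_\rho})\vol(M,g_\rho)^{1/n}\ge\Lambda_\rho\to\infty$.

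The main obstacle is this gluing and localisation step, where two points need care. First, the interfaces between the pieces must be arranged to sit at a scale small compared with the length over which the cutoff functions vary; this is the reason for inserting a \emph{long} cylinder rather than a short neck, and it is what keeps the cutoff error $\|d\phi_i\|_\infty$ negligible against the gap of order $\rho^{-1}$. Second, the harmonic spinors of $g_0$ must be shown --- via the concentration argument above --- to yield eigenvalue exactly $0$ of $D_{g_\rho}$, in accordance with the conformal invariance of $\dim\ker D$, rather than a cluster of small nonzero eigenvalues which, after rescaling, would land in $(0,\Lambda_\rho)$ and destroy the estimate.
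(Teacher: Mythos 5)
Your overall strategy is the same as the paper's: work with metrics having a longer and longer thin cylinder attached at a point (what the paper calls an \emph{asymptotically cylindrical blowup}), use the conformal covariance of $D$, and show by a localisation/spectral-gap argument that the nonzero eigenvalues cannot sink below a fixed threshold in the scale-invariant normalisation. Your normalisation ($\vol=1$, radius $\rho\to 0$) is a conformal rescaling of the paper's ($L\to\infty$ with unit-radius cylinder), and your partition-of-unity estimate corresponds to the paper's ``supremum/infimum'' analysis in \S4. You also correctly identify the central obstruction, namely harmonic spinors, and the correct resolution via conformal invariance of $\dim\ker D$.

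There is, however, a genuine gap at the decisive step, which is also where the paper spends most of its effort. You assert that the Dirac operator on the ``body'' piece has a spectrum near $0$ consisting of ``a cluster of exactly $h=\dim\ker D_{g_0}$ eigenvalues tending to $0$ with $r$,'' and you use this to conclude that the space of eigenspinors of $D_{g_\rho}$ with $|\lambda|<\Lambda_\rho$ has dimension at most $h$. This assertion is neither obvious nor proved, and it conflates two issues. First, the relevant operator is the Dirac operator $D_\infty$ on the complete manifold $(M_\infty,g_\infty)$ with cylindrical end, acting on $L^2$ (or equivalently the body with an attached half-infinite cylinder); the fact that $\dim_{L^2}\ker D_\infty = h$ is \emph{not} a consequence of sending $r\to 0$ but requires a removable-singularity argument, which the paper supplies via Lemma~\ref{lem.sing.rem} and the isomorphism $\ker P_0\cong\ker P_\infty$ in \S3.5. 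Without this, your dimension count in the key step has no input: a priori the $L^2$-kernel of $D_\infty$ could be larger than $h$, or $D_\infty$ could have extra small nonzero eigenvalues, and then the inclusion $\ker D_{g_\rho}\subset\{\psi:|\lambda|<\Lambda_\rho\}$ would no longer force equality. Second, the existence of a uniform threshold $c'$ (your $\Lambda_\rho=c'\rho^{-1}$) requires that the $L^2$-spectrum of $D_\infty$ inside $(-\sigma_D,\sigma_D)$ be discrete, so that the smallest nonzero small eigenvalue is bounded away from $0$; this is exactly what the $b$-calculus input (Lemma~\ref{b-lemma}, Proposition~\ref{prop.interpol}) provides. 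Finally, the passage from ``$\psi$ concentrates on the body and is an approximate eigenspinor there'' to the dimension bound $\le h$ is itself a nontrivial compactness-and-orthogonality argument — the paper's \S\ref{subsec.inf} carries this out carefully with the renormalised sections $\psi_{i,\ell}$, elliptic estimates, and a separate lemma proving that the limits remain linearly independent. As written, your proposal names the right ingredients but leaves precisely these steps — the ones the paper was written to establish — to unproven assertion.
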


\begin{theorem}\label{theo.main.yam}
Let $(M,g_0,\chi)$ be compact Riemannian manifold of dimension $n\geq 3$.
For any metric~$g$
in the conformal class $[g_0]$, we denote the first positive eigenvalue of
the conformal Laplacian $L_g:=\Delta_g + \frac{n-2}{4(n-1)}\Scal_g$ (also called Yamabe operator)
on $(M,g,\chi)$ by $\la_1^+(L_g)$.
Then
   $$\sup_{g\in [g_0]}\la_1^+(L_g)\vol(M,g)^{2/n}=\infty.$$
\end{theorem}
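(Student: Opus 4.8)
The plan is to reduce the statement, via conformal covariance, to a question about a suitable family of conformal factors, and then to build one explicit family by a surgery construction. Write $c_n=\frac{n-2}{4(n-1)}$, so $L_g=\Delta_g+c_n\Scal_g$, and recall $L_{u^{4/(n-2)}g}(\ph)=u^{-\frac{n+2}{n-2}}L_g(u\ph)$. Fix a representative $g_0\in[g_0]$ and put $Q_0(\psi)=\int_M\psi L_{g_0}\psi\,dv_{g_0}$. For $g=u^{4/(n-2)}g_0$ the substitution $\psi=u\ph$ turns $L_g\ph=\la\ph$ into $L_{g_0}\psi=\la\,u^{4/(n-2)}\psi$, and one checks $\int_M\ph L_g\ph\,dv_g=Q_0(u\ph)$ and $\int_M\ph^2\,dv_g=\int_M(u\ph)^2u^{4/(n-2)}\,dv_{g_0}$. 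Since the maximal dimension of a subspace on which the symmetric form $Q_0$ is $\le0$ does not depend on the $L^2$-structure, the number $q_0$ of eigenvalues $\le0$ of $L_g$ (with multiplicity) is the same for all $g\in[g_0]$; hence $\la_1^+(L_g)$ is always the $(q_0+1)$-st eigenvalue $\mu_{q_0+1}(L_g)$, counted from the bottom. (If $0$ is an eigenvalue, i.e.\ in the Yamabe‑null case, the kernel is included in $q_0$; nothing below changes.)

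First I would construct, for every large integer $N$, a metric $g_N=u_N^{4/(n-2)}g_0\in[g_0]$ which looks like $(M,g_0)$ with $N$ round spheres $S^n$ of radius $1$ grafted on through thin necks: a conformal, scalar‑curvature‑controlled variant of the Gromov--Lawson--Schoen--Yau surgery. Concretely, inside a fixed small ball of $(M,g_0)$ one uses that a punctured ball is conformal to a half‑cylinder to insert, by a suitable choice of $u_N$, a long thin neck along which $u_N$ grows, and one attaches $N$ copies of a round sphere to this neck by connected sum (codimension $n\ge3$, so scalar curvature can be kept positive); outside the ball one sets $u_N\equiv1$. A small ball of $(M,g_0)$ differs from a Euclidean ball by terms of relative size $O(\rho^2)$, so the spherical pieces carry scalar curvature $n(n-1)+O(\rho^2)$ and the necks large positive scalar curvature, while the one unavoidable ``corner'' of $u_N$, where it returns to $1$, can be confined to a region of fixed geometry in which $u_N^{4/(n-2)}$ is bounded. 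What one needs to extract is: constants $\ep_0,C',W_0>0$ independent of $N$ and a decomposition $M=A_N\cup B_N$ with $\Scal_{g_N}\ge-C'$ on $M$, $\Scal_{g_N}\ge\ep_0$ on $A_N$, $u_N^{4/(n-2)}\le W_0$ on $B_N$, and $\Vol(M,g_N)\ge cN$ (each grafted sphere contributes volume close to $\om_n$). Carrying out this surgery inside the fixed conformal class with uniform control of the scalar curvature is the only point where genuine work is required, and is the expected main obstacle.

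Granting such $g_N$, the lower bound on $\la_1^+(L_{g_N})$ is soft. Let $E\subset C^\infty(M)$ be the $q_0$‑dimensional span of the eigenfunctions of $L_{g_0}$ with eigenvalue $\le0$, so $Q_0(\psi)\ge\la_1^+(L_{g_0})\|\psi\|_{L^2(g_0)}^2$ whenever $\psi\perp_{L^2(g_0)}E$, with $\la_1^+(L_{g_0})>0$. Set $G_N:=\{u_N^{-\frac{n+2}{n-2}}e:e\in E\}$, a $q_0$‑dimensional subspace of $L^2(M,dv_{g_N})$. Using $\<\ph,u_N^{-\frac{n+2}{n-2}}e\>_{g_N}=\<u_N\ph,e\>_{L^2(g_0)}$ one sees that $\ph\perp_{g_N}G_N$ is equivalent to $u_N\ph\perp_{L^2(g_0)}E$. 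Writing $Q_N(\ph):=\int_M\ph L_{g_N}\ph\,dv_{g_N}=Q_0(u_N\ph)$ and $\|\ph\|_{g_N}^2:=\int_M\ph^2\,dv_{g_N}$, such a $\ph$ satisfies
\[
Q_N(\ph)\ \ge\ \la_1^+(L_{g_0})\!\int_M(u_N\ph)^2\,dv_{g_0}
\qquad\text{and}\qquad
Q_N(\ph)\ \ge\ c_n\ep_0\|\ph\|_{g_N}^2-c_n(\ep_0+C')\!\int_{B_N}\ph^2\,dv_{g_N},
\]
the first because $u_N\ph\perp_{L^2(g_0)}E$, the second by discarding the gradient term and using $\Scal_{g_N}\ge\ep_0$ on $A_N$, $\Scal_{g_N}\ge-C'$ on $B_N$. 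Since moreover $\int_{B_N}\ph^2\,dv_{g_N}=\int_{B_N}(u_N\ph)^2u_N^{4/(n-2)}\,dv_{g_0}\le W_0\int_M(u_N\ph)^2\,dv_{g_0}\le\frac{W_0}{\la_1^+(L_{g_0})}Q_N(\ph)$ by the first inequality, the second one yields $Q_N(\ph)\ge c_0\|\ph\|_{g_N}^2$ with $c_0:=c_n\ep_0\big(1+c_n(\ep_0+C')W_0/\la_1^+(L_{g_0})\big)^{-1}>0$, independent of $N$. Applying the max--min principle with the test space $G_N$ gives $\la_1^+(L_{g_N})=\mu_{q_0+1}(L_{g_N})\ge c_0$.

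Finally, $\la_1^+(L_{g_N})\,\Vol(M,g_N)^{2/n}\ge c_0\,(cN)^{2/n}\to\infty$; replacing $g_N$ by $\Vol(M,g_N)^{-2/n}g_N$, which rescales $\la_1^+$ by $\Vol(M,g_N)^{2/n}$ and normalizes the volume to $1$, produces metrics in $[g_0]$ of volume $1$ whose first positive Yamabe eigenvalue is arbitrarily large, proving Theorem~\ref{theo.main.yam}. The same scheme—conformal covariance, conformal invariance of the number of eigenvalues in $(-\la,\la]$, and a scalar‑curvature‑controlled conformal surgery grafting round spheres—gives Theorem~\ref{theo.main.dir} for the Dirac operator in dimension $\ge2$.
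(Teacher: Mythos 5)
Your lower-bound argument is correct as far as it goes: the index $q_0$ is conformally invariant by Sylvester's law applied to the conformally invariant form $Q_0$, the adjointness identity $\<\phi,u_N^{-(n+2)/(n-2)}e\>_{g_N}=\<u_N\phi,e\>_{L^2(g_0)}$ holds, both displayed inequalities follow as you say, and max--min over the $q_0$-dimensional space $G_N$ gives $\lambda_1^+(L_{g_N})\ge c_0$ uniformly. This is a genuinely different and, for $L_g$, shorter route than the paper's, which instead proves spectral convergence $\lambda_j^\pm(P_{g_L})\to\lambda_j^\pm(P_\infty)$ along an asymptotically cylindrical blow-up, using $b$-calculus on the complete limit $(M_\infty,g_\infty)$, removal of singularities, and cut-off arguments. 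But you never construct the $u_N$, and the surgery language you use obscures what is actually needed: since $[g_0]$ and the smooth manifold $M$ are fixed, no connected sum is available --- you may only vary a conformal factor. What makes the picture realizable is that a punctured geodesic ball near $y$ is conformally close to a half-cylinder $\mS^{n-1}\times(0,\infty)$, so one may choose conformal factors there modelling either a long cylinder or a chain of round spheres-minus-two-discs joined by necks (these are conformally equivalent). In fact the paper's own Pinocchio metrics $g_L=F_L^2g_0$, with $F_L$ chosen near the tip to model a round hemisphere, already satisfy all your requirements with $A_L=\{d(\cdot,y)\le\delta_0\}$ (cylinder plus spherical cap, where $\Scal_{g_L}\ge(n-1)(n-2)/2$ once $\delta_0$ is small enough), $B_L$ its complement (where $F_L=F_\infty$ is a fixed bounded function, so $\Scal_{g_L}$ is a fixed bounded function and $u_L^{4/(n-2)}=F_\infty^2$ is uniformly bounded), and $\Vol(M,g_L)\ge cL$. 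So the gap is fillable, but it is a gap: you need an explicit $F_L$ together with a conformal scalar-curvature computation, not an appeal to Gromov--Lawson surgery, which is simply not performed in a fixed conformal class.

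Your closing sentence, that ``the same scheme gives Theorem~\ref{theo.main.dir},'' is wrong. Two load-bearing steps collapse for the Dirac operator: $D$ has infinitely many negative eigenvalues, so $\lambda_1^+(D_g)$ is not the $(q_0+1)$-st eigenvalue from below for any finite $q_0$, and there is no conformally invariant ``number of eigenvalues in $(-\lambda,\lambda]$'' (only $\dim\ker D_g$ is a conformal invariant). Moreover the form $\int\<D\phi,\phi\>$ has no sign controlled by scalar curvature; Lichnerowicz bounds $D^2$, hence $|\lambda|$, but gives no lower bound on the first \emph{positive} eigenvalue. This is precisely why the paper takes the spectral-convergence route, which handles $D$, $L$, and the Paneitz operator uniformly; your scalar-curvature shortcut is specific to the conformal Laplacian.
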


The Dirac operator and the conformal Laplacian belong to a large family
of operators, definded in details in subsection~\ref{subsec.cov.conf}.
These operators are called conformally covariant elliptic operators
of order $k$ and of bidegree $((n-k)/2,(n+k)/2)$,
acting on manifolds $(M,g)$ of dimension $n>k$. 
In particular, our definition includes formal self-adjointness.

The above theorems can be generalized to the following:

\begin{theorem}\label{theo.main}
Let $P_g$ be a conformally covariant elliptic operator of order $k$,
of bidegree $((n-k)/2,(n+k)/2)$
acting on manifolds of dimension $n>k$. We also assume
that $P_g$ is invertible on $\mS^{n-1}\times \mR$ (see Definition~\ref{def.invsphere}).
Let $(M,g_0)$ be compact Riemannian manifold. In the case that
$P_g$ depends on the spin structure, we assume that $M$ is oriented and is
equipped with a spin structure.
For any metric~$g$ in the conformal class $[g_0]$, we denote the first
positive eigenvalue of $P_g$ by $\la_1^+(P_g)$.
Then
   $$\sup_{g\in [g_0]}\la_1^+(P_g)\vol(M,g)^{k/n}=\infty.$$
\end{theorem}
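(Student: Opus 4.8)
The strategy is to construct, for each $L>0$, a metric $g_L\in[g_0]$ with $\vol(M,g_L)\to\infty$ linearly in $L$ while $\la_1^+(P_{g_L})$ stays bounded below by a fixed positive constant. Since $\la_1^+(P_g)\,\vol(M,g)^{k/n}$ is invariant under scaling $g\mapsto c^2g$ (the eigenvalues of an operator of order $k$ scale like $c^{-k}$), such a family already proves $\sup_{g\in[g_0]}\la_1^+(P_g)\,\vol(M,g)^{k/n}=\infty$.

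To build $g_L$, fix $p\in M$; in geodesic normal coordinates around $p$ the metric $g_0$ differs from the Euclidean one only at second order, and the punctured flat ball carries the metric $dr^2+r^2g_{\mS^{n-1}}$, which, reparametrised by $t=-\log r$, equals $e^{-2t}(dt^2+g_{\mS^{n-1}})$ and is hence conformal to the product half-cylinder. Multiplying $g_0$ near $p$ by a suitable conformal factor, truncating at cylinder-length $L$ and capping off with a fixed spherical cap, I obtain a metric $g_L\in[g_0]$ — this uses $M=M\#\mS^n$ — that decomposes as a fixed ``core'' $C\cong(M\setminus B_{r_0}(p),g_0)$, a long cylinder $Z_L$ isometric to $\mS^{n-1}(1)\times[0,L]$ up to an error decaying exponentially in $t$, and a fixed ``cap'' $K$ diffeomorphic to a disc; both $C$ and $K$ have bounded geometry independent of $L$. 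Consequently $\vol(M,g_L)=L\cdot\vol(\mS^{n-1})+O(1)$.

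Now I bound $\la_1^+(P_{g_L})$ from below. Invertibility of $P$ on $\mS^{n-1}\times\mR$ gives a number $\mu>0$ with $\Spec(P_{\mS^{n-1}\times\mR})\cap(-\mu,\mu)=\emptyset$; separating variables on the product cylinder then shows that every solution of $(P-\la)\psi=0$ with $|\la|\le\mu/2$ is a superposition of modes $e^{\pm\zeta t}\phi$ with $\Re\zeta\ge\delta$ for a fixed $\delta>0$, so such a solution that is controlled at the two ends of $Z_L$ is $O(e^{-\delta L/3})$ on the middle third of $Z_L$. Applied to an eigensection of $P_{g_L}$ with eigenvalue in $(-\mu/2,\mu/2)$ and combined with a cutoff in the middle of $Z_L$, this shows that the low-lying spectrum of $P_{g_L}$ converges, with multiplicities, as $L\to\infty$ to the union of the corresponding spectra of $P$ on the two complete cylindrical-end manifolds $C^\infty$ ($=C$ with a half-cylinder attached) and $K^\infty$ ($=K$ with a half-cylinder attached); in particular $\la_1^+(P_{g_L})\to\min(\la_1^+(P_{C^\infty}),\la_1^+(P_{K^\infty}))$, provided $0$ lies in neither spectrum. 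To see that it does not: $C^\infty$ is conformally equivalent to $(M\setminus\{p\},g_0)$ with a conformal factor blowing up at $p$, so an $L^2$ element of $\ker P_{C^\infty}$ pulls back to a solution of $P_{g_0}\ti\psi=0$ on $M\setminus\{p\}$ whose growth at $p$ is weaker than that of the fundamental solution; as $n>k$ this singularity is removable, and $\ti\psi$ extends to an element of $\ker P_{g_0}$ on the closed manifold $M$. Replacing $g_0$ by a generic representative of its conformal class (which changes neither the conformal class nor the supremum) we may assume $\ker P_{g_0}=0$, so $\ker P_{C^\infty}=0$; and we choose $K$ so that $K^\infty$ is conformal to flat $\mR^n$, where a Liouville-type argument gives $\ker P_{K^\infty}=0$. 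Hence $\la_1^+(P_{g_L})\to c_0>0$, and $\la_1^+(P_{g_L})\,\vol(M,g_L)^{k/n}\ge\frac{c_0}{2}\,(L\cdot\vol(\mS^{n-1}))^{k/n}\to\infty$.

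The main difficulty is this spectral lower bound, and especially its last step in the exceptional case where the topology of $M$ forces $\ker P_g\neq0$ for every $g\in[g_0]$ (this happens for the Dirac operator in suitable dimensions): then $\ker P_{C^\infty}\neq0$, a kernel of $P_{C^\infty}$ a priori splits into eigenvalues of $P_{g_L}$ of size $O(e^{-\delta L})$, and one must argue that these near-zero modes genuinely lie in $\ker P_{g_L}$ rather than producing a small positive eigenvalue that would destroy the estimate. The analytic-surgery convergence of the low spectrum, and — for the Dirac operator — the fact that $P$ is not semibounded (so the Rayleigh-type manipulations above must be carried out on the orthogonal complement of the finite-dimensional nonpositive eigenspace, whose behaviour as $L\to\infty$ must itself be controlled), are the other technical points; the conformal cylinder construction and the exponential decay governed by $\mu$ are comparatively routine.
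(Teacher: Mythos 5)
Your overall strategy (asymptotically cylindrical conformal blowup, scale invariance of $\la_1^+\cdot\vol^{k/n}$, spectral convergence to the cylindrical-end limit) is the same as the paper's, but there is a genuine gap in the treatment of the kernel, and it is not a side issue — it is the crux of the proof. You write that by ``replacing $g_0$ by a generic representative of its conformal class'' one may assume $\ker P_{g_0}=0$. This cannot work: for any conformally covariant operator the map $\phi\mapsto f^{-(n-k)/2}\phi$ is an isomorphism $\ker P_g\to\ker P_{f^2g}$, so $\dim\ker P_g$ is \emph{constant on the conformal class}. There is no generic representative to retreat to. You do acknowledge this a few lines later as an ``exceptional case'' for the Dirac operator, but you then leave open exactly the point that has to be settled: when $d:=\dim\ker P_{g_0}>0$, why does the cluster of $O(e^{-\de L})$-small eigenvalues of $P_{g_L}$ consist of \emph{exactly} the $d$ zeros and not of $d$ zeros plus a strictly positive eigenvalue drifting to $0$? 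This is precisely what must be proved, and your proposal stops at naming the difficulty.

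The paper closes this gap with two ingredients you do not supply. First, a removal-of-singularities argument shows that conformal rescaling gives an \emph{isomorphism} $\ker P_{g_0}\cong\ker P_{g_\infty}$ onto the $L^2$-kernel of the complete cylindrical-end manifold (this is the content of the Proposition in Section~3.5, using Lemma~\ref{lem.sing.rem}); you only sketch the inclusion $\ker P_{C^\infty}\hookrightarrow\ker P_{g_0}$. Second, the convergence statement (Theorem~\ref{theo.conv}) is proved with precise multiplicity bookkeeping: if along a subsequence $\la_1^+(P_{g_{L_i}})\to 0$, one rescales the $d$ kernel eigenvectors together with the first positive eigenvector of $P_{g_{L_i}}$, extracts $C^k_{\rm loc}$-limits, shows no $L^2$-mass escapes into the shrinking cap (the role of $F_L\le F_\infty$ and the invertibility constant $\si_P$), and obtains $d+1$ orthonormal elements of $\ker P_\infty$ — contradicting $\dim\ker P_\infty=d$. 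Your two-end Cappell--Lee--Miller set-up also introduces the extra manifold $K^\infty$ and an unproven Liouville-type claim $\ker P_{K^\infty}=0$ for a general conformally covariant $P$ on $\mR^n$; the paper avoids this entirely because its cap is conformally suppressed ($F_L\le F_\infty$) and contributes no limiting end. So the construction is essentially right, but the decisive spectral-counting step — the only part that is hard — is missing.
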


The interest in this result is motivated by three questions.
At first, we found that the infimum
$$\inf_{g\in [g_0]}\la_1^+(D_g)\vol(M,g)^{1/n}$$
has a rich geometrical structure \cite{ammann:habil}, \cite{ammann:p03},
\cite{ammann.humbert.morel:06}, \cite{ammann.humbert:06a}.
In particular it is strictly positive \cite{ammann:03}
and under some condition
preventing the blowup of spheres it is attained  \cite{ammann:habil}, 
\cite{ammann:p03}.

The second motivation comes from comparing this result to results about
other differential operators. Let us recall that for the Hodge Laplacian
$\Delta_p^g$ acting on $p$-forms, we have
$\sup_{g\in [g_0]}\la_1(\Delta_p^g)\vol(M,g)^{2/n}=+\infty$ for
$n\geq4$ and $2\leq p\leq n-2$ (\cite{colbois.elsoufi:06}). 
On the other hand,
for the standard Laplacian $\Delta^g$ acting on functions, we have
$\sup_{g\in [g_0]}\la_k(\Delta^g)\vol(M,g)^{2/n}<+\infty$ (the case $k=1$
is proven in \cite{elsoufi.ilias:86} and the general case in
\cite{korevaar:93}). See \cite{jammes:07} for a synthetic presentation of
this subject.

The essential idea in the proof is to construct metrics with longer and
longer cylindrical parts. We will call this an
\emph{asymptotically cylindrical blowup}.
Such metrics are also called \emph{Pinocchio metrics} in
\cite{ammann:diss,ammann.baer:00}. In \cite{ammann:diss,ammann.baer:00}
the behavior of Dirac eigenvalues
on such metrics has already been studied partially, but
the present article has much stronger results.
This provides the third motivation.

{\bf Acknowledgements}
We thank B. Colbois, M. Dahl, E. Humbert and O. Hijazi 
for many related discussions.
We thank R. Gover for some helpful comments on conformally covariant operators,
and for several references.
The first author wants to thank cordially the Einstein institute at
Potsdam-Golm for its hospitality which enabled to write the article.

\section{Preliminaries}

\subsection{Notations}
In this article $B_y(r)$ denotes the ball of radius $r$ around $y$, $S_y(r)=\pa B_y(r)$
its boundary. The standard sphere $S_0(1)\subset \mR^n$ in $\mR^n$ is also denoted by $\mS^{n-1}$, its 
volume is $\om_{n-1}$. For the volume element of $(M,g)$ we use the notation $dv^g$. 

For sections $u$ of a vector bundle $V\to M$ over a Riemannian manifold $(M,g)$ 
the Sobolev norms $L^2$ and $H^s$, $s\in \mN$, are defined as
\begin{eqnarray*}
\|u\|_{L^2(M,g)}^2&:=&\int_M |u|^2\,dv^g\\
\|u\|_{H^s(M,g)}^2&:=&\|u\|_{L^2(M,g)}^2 + \|\na u\|_{L^2(M,g)}^2 +\ldots
    + \|\na^s u\|_{L^2(M,g)}^2.
\end{eqnarray*}
The vector bundle $V$ will be suppressed in the notation.
If $M$ and $g$ are clear from the context, we write just $L^2$ $H^s$.
The completions of the compactly supported sections of $V$
with respect to these norms are called $L^2(M,g)$ and $H^s(M,g)$.

\subsection{Removal of singularities}

In the proof we will use the following removal of singularity lemma.

\begin{lemma}[Removal of singularities lemma]\label{lem.sing.rem}
Let $\Omega$ be a bounded open subset of $\mR^n$ containing $0$.
Let $P$ be an elliptic differential operator of order $k$ on $\Omega$, 
$f\in C^\infty(\Om)$, and let $u\in C^\infty(\Om\setminus\{0\})$ be a solution
of
\begin{equation}\label{eq.P}
Pu=f
\end{equation}
on $\Om\setminus\{0\}$
with
\begin{equation}\label{cond.rem}
\lim_{\ep\to 0}\int_{B_0(2\ep)-B_0(\ep)} |u|r^{-k}=0 \mbox{ \ and \ }
\lim_{\ep\to 0}\int_{B_0(\ep)} |u| =0
\end{equation}
where $r$ is the distance to $0$.
Then $u$ is a (strong) solution of~\eref{eq.P}
on $\Om$.
The same result holds for sections of vector bundles over relatively compact open subset of Riemannian manifolds.
\end{lemma}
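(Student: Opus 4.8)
The plan is to reduce the statement to the known mapping properties of elliptic operators on bounded domains by testing the equation against smooth compactly supported sections and showing that the singular point contributes nothing. First I would fix a cutoff function: let $\chi\in C^\infty(\mR)$ with $\chi(t)=0$ for $t\leq 1$, $\chi(t)=1$ for $t\geq 2$, and set $\chi_\ep(x):=\chi(r(x)/\ep)$, where $r$ is the distance to $0$. Then $\chi_\ep u\in C^\infty_c(\Om\setminus\{0\})$ and $\chi_\ep u\to u$ in $L^1_{\rm loc}$ by the second condition in \eref{cond.rem}. To show $u$ is a strong (or, equivalently for elliptic $P$, a distributional) solution of $Pu=f$ on all of $\Om$, it suffices to prove that for every test section $\ph\in C^\infty_c(\Om)$ one has $\int_\Om u\, (P^*\ph) = \int_\Om f\,\ph$, where $P^*$ is the formal adjoint; note $\int_\Om f\ph$ is exactly $\lim_\ep \int_\Om (\chi_\ep u)(P^*\ph)$ up to the error terms coming from the commutator $[P^*,\chi_\ep]$.

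The key step is therefore to estimate the commutator term $\int_{B_0(2\ep)\setminus B_0(\ep)} u\,\bigl([P^*,\chi_\ep]\ph\bigr)$ and show it tends to $0$ as $\ep\to0$. Since $P$ (hence $P^*$) has order $k$, the commutator $[P^*,\chi_\ep]$ is a differential operator of order $k-1$ whose coefficients involve derivatives of $\chi_\ep$ of orders $1,\dots,k$; the $j$-th such derivative is bounded by $C_j\ep^{-j}$ and supported in the annulus $B_0(2\ep)\setminus B_0(\ep)$. Thus, applied to the fixed smooth section $\ph$, the worst term is bounded pointwise by $C\ep^{-k}\sup|\ph|$ there, and the contribution is controlled by $C\sup|\ph|\int_{B_0(2\ep)\setminus B_0(\ep)}|u|\,r^{-k}$ (the remaining terms carry smaller powers $\ep^{-j}$, $j<k$, times $\int_{B_0(2\ep)}|u|$, which are even smaller). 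By the first hypothesis in \eref{cond.rem} this vanishes in the limit. Hence $\int_\Om u\,(P^*\ph)=\int_\Om f\,\ph$ for all $\ph$, i.e. $Pu=f$ holds in $\Om$ in the distributional sense; elliptic regularity (together with $f\in C^\infty$) then upgrades $u$ to a smooth, hence strong, solution on $\Om$.

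The main obstacle is the bookkeeping in the commutator estimate: one must check that each term produced by Leibniz's rule when $P^*$ hits $\chi_\ep\ph$ is paired with a high enough inverse power of $r$ to be absorbed by one of the two hypotheses in \eref{cond.rem} — the top-order term needs the weight $r^{-k}$, while all lower-order terms are handled by the plain $L^1$ condition $\int_{B_0(\ep)}|u|\to0$ (enlarging $B_0(\ep)$ to $B_0(2\ep)$ changes nothing). For the manifold/vector-bundle version, one works in a local trivialization over a coordinate chart around the singular point, where $P$ becomes an elliptic system with smooth coefficients and the Euclidean argument applies verbatim; the distance function $r$ and the Euclidean distance are comparable on a small chart, so the weighted integrability conditions are chart-independent. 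No global structure is needed since the statement is local around $0$.
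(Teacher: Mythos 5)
Your proof takes essentially the same route as the paper's: both establish that $u$ is a weak solution by testing against a compactly supported smooth section, introduce a radial cutoff (your $\chi_\ep$ is $1-\eta$ in the paper's notation), estimate the annular error term, and conclude by elliptic regularity. One imprecision is worth flagging: you claim the commutator terms carrying a factor $\ep^{-j}$ with $1\leq j<k$ are ``handled by the plain $L^1$ condition $\int_{B_0(\ep)}|u|\to0$,'' but that hypothesis carries no decay rate and hence cannot absorb any factor $\ep^{-j}$ with $j\geq 1$. In fact every term of $[P^*,\chi_\ep]$ is supported in the annulus $B_0(2\ep)\setminus B_0(\ep)$, where $r\leq 2\ep$, so
$\ep^{-j}\int_{B_0(2\ep)\setminus B_0(\ep)}|u|\leq 2^{k}\,\ep^{\,k-j}\int_{B_0(2\ep)\setminus B_0(\ep)}|u|\,r^{-k}\to 0$
already by the \emph{first} condition in \eref{cond.rem}, with an extra favorable factor $\ep^{k-j}$; the unweighted condition is needed only to ensure $u\in L^1_{\rm loc}(\Om)$ and to control the $j=0$ error $\int(1-\chi_\ep)u\,P^*\ph$ in the dominated-convergence step. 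With that repair the argument is complete and parallels the paper's.
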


\begin{proof}
We show that $u$ is a weak solution of \eref{eq.P}, and then it follows
from standard regularity theory, that it is also a strong solution.
This means that we have to show that for any given compactly supported smooth
test function $\psi:\Om\to \mR$ we have
  $$\int_\Om u P^* \psi = \int_\Om f \psi.$$

Let $\eta:\Omega\to [0,1]$ be a test function that is identically $1$ on $B_0(\ep)$,
has support in $B_0(2\ep)$, and with
$|\na^m \eta |\leq C_m/\ep^m$. It follows that
  $$\sup |P^* (\eta \psi)|\leq C(P,\psi)\ep^{-k},$$
on $B_0(2\ep)\setminus B_0(\ep)$ and  $\sup |P^* (\eta \psi)|\leq C(P,\psi)$
on $B_0(\ep)$
and hence
\begin{equation}
\begin{split}
  \left| \int_\Om u P^*(\eta\psi)\right| &\leq C \ep^{-k} \int_{B_0(2\ep)
\setminus B_0(\ep)} |u| + C  \int_{B_0(\ep)} |u|\\
& \leq C \int_{B_0(2\ep)\setminus B_0(\ep)} |u|r^{-k}+ C  \int_{B_0(\ep)} |u|\to 0.
\end{split}
\end{equation}
We conclude
\begin{equation}
\begin{split}
  \int_\Om u P^*\psi &= \int_\Om u P^*(\eta \psi) +  \int_\Om u P^*((1-\eta) \psi)\\
          &= \underbrace{\int_\Om u P^*(\eta \psi)}_{\to 0}
    +  \underbrace{\int_\Om (Pu) (1-\eta) \psi}_{\to \int_\Om f\psi}
\end{split}
\end{equation}
for $\ep\to 0$. Hence the lemma follows.
\end{proof}

Condition \eref{cond.rem} is obviously satisfied if $\int_\Om |u|r^{-k}<\infty$.
It is also satisfied if
\begin{equation}\label{cond.rem.two}
  \int_\Om |u|^2r^{-k}<\infty\mbox{ and }k\leq n,
\end{equation}
as in this case
$$\left(\int_{B_0(2\ep)\setminus B_0(\ep)} |u|r^{-k}\right)^2 \leq \int_\Om
|u|^2 r^{-k}\underbrace{\int_{B_0(2\ep)\setminus B_0(\ep)}r^{-k}}_{\leq C}.$$
\subsection{Conformally covariant elliptic operators}\label{subsec.cov.conf}

In this subsection we present a class of certain conformally covariant
elliptic operators.
Many important geometric operators are  in this class, in particular the conformal Laplacian,
the Paneitz operator, the Dirac operator, see also  \cite{fegan:76,branson:98,gover.peterson:03} for more examples.

Such an operator is not just one single differential operator, but a procedure
how to associate to an $n$-dimensional Riemannian manifold $(M,g)$
(potentially with some additional structure)
a differential operator $P_g$ of order $k$ acting on a vector bundle.
The important fact is that if $g_2=f^2 g_1$, then one claims
\begin{equation}\label{eq.conf.trafo}
P_{g_2}= f^{-\frac{n+k}2} P_{g_1} f^{\frac{n-k}2}.
\end{equation}
One also expresses this by saying that $P$ has bidegree $((n-k)/2,(n+k)/2)$.

The sense of this equation is apparent if $P_g$ is an operator from
$C^\infty(M)$ to $C^\infty(M)$. If $P_g$ acts on a vector bundle or
if some additional structure (as e.g. spin structure) is used for
defining it, then a rigorous and
careful definition needs more attention.
The language of categories provides
a good formal framework \cite{maclane:98}.
The concept of conformally covariant elliptic
operators is already used by many authors, but we do not know of a reference
where a formal definition is carried out that fits to our context.
(See \cite{kolar.michor.slovak:93} for a similar categorial
approach that includes some of the operators presented here.)
Often an intuitive definition is used.
The intuitive definition is obviously sufficient if one deals with operators acting on functions,
such as the conformal Laplacian or the Paneitz operator.
However to properly state
Theorem~\ref{theo.main} we need the following definition.

Let $\Riem$ (resp. $\Riemspin$)  be the
category $n$-dimensional
Riemannian manifolds (resp. $n$-dimensional Riemannian
manifolds with orientation
and spin structure). Morphisms from $(M_1,g_1)$ to $(M_2,g_2)$
are conformal embeddings $(M_1,g_1)\hookrightarrow (M_2,g_2)$
(resp. conformal embeddings preserving orientation and spin structure).

Let $\Laplace$ (resp. $\Dirac$) be the category
whose objects are $\{(M,g),V_g,P_g\}$ where $(M,g)$ in an object of
$\Riem$ (resp. $\Riemspin$), where $V_g$ is a vector bundle with
a scalar product on the fibers, where $P_g:\Gamma(V_g)\to \Gamma(V_g)$
is an elliptic formally selfadjoint differential operator of order $k$.

A morphism $(\iota,\kappa)$
from $\{(M_1,g_1), V_{g_1},P_{g_1}\}$ to $\{(M_2,g_2), V_{g_2},P_{g_2}\}$
consists of a conformal embedding $\iota:(M_1,g_1)\hookrightarrow (M_2,g_2)$
(preserving orientation and spin structure in the case of $\Dirac$)
together with a fiber isomorphism $\kappa:\iota^*V_{g_2}\to V_{g_1}$
perserving
fiberwise length, such that $P_{g_1}$ and $P_{g_2}$
satisfy the conformal covariance
property \eref{eq.conf.trafo}.
For stating this property precisely, let $f>0$ be defined  by
$\iota^*g_2= f^2 g_1$, and let
$\kappa_*:\Gamma(V_{g_2})\to \Gamma(V_{g_1})$,
$\kappa_*(\phi)=\kappa\circ \phi\circ
\iota$. Then the conformal covariance property is
\begin{equation}\label{eq.conf.trafo.rig}
\kappa_*P_{g_2}= f^{-\frac{n+k}2} P_{g_1} f^{\frac{n-k}2}
\kappa_*.
\end{equation}

In the following the maps $\kappa$ and $\iota$ will often be evident
from the context and then will be omitted. The transformation
formula~\eref{eq.conf.trafo.rig} then simplifies to~\eref{eq.conf.trafo}.

\begin{definition}
A \emph{conformally covariant elliptic operator of order $k$ and
of bidegree $((n-k)/2,(n+k)/2)$} is
a contravariant functor from $\Riem$ (resp. $\Riemspin$) to $\Laplace$
(resp. $\Dirac$), mapping $(M,g)$ to $(M,g,V_g,P_g)$ in such a way that the
coefficients are continuous in the $C^k$-topology of metrics (see below).
To shorten notation, we just write $P_g$ or $P$ for this functor.
\end{definition}

It remains to explain the $C^k$-continuity of the coefficients.

For Riemannian metrics $g$, $g_1$, $g_2$ defined on a
compact set $K\subset M$ we set
  $$d^g_{C^k(K)}(g_1,g_2):=\max_{t=0,\ldots, k} \|(\na_g)^t(g_1-g_2)\|_{C^0(K)}.$$
For a fixed background metric $g$, the relation
$d^g_{C^k(K)}(\,\cdot\,,\,\cdot\,)$
defines a distance function on the space of metrics on $K$.
The topology induced by $d^g$ is independent of
this background metric and it is called the
\emph{$C^k$-topology of metrics on $K$}.



\begin{definition}
We say that the coefficients of $P$ are
\emph{continuous in the $C^k$-topology of metrics}
if for any metric $g$ on a manifold $M$, and for any compact subset $K\subset M$
there is a neighborhood $\cU$ of $g|_K$ in the
$C^k$-topology of metrics on $K$, such that for all metrics $\ti g$, $\ti g|_K\in\cU$, there is
an isomorphism of vector bundles
$\hat\kappa:V_g|_K\to V_{\ti g}|_K$ over the identity of $K$ with induced map
$\hat\kappa_*:\Gamma(V_g|_K)\to\Gamma(V_{\ti g}|_K)$ with the property that
the coefficients of the differential operator
 $$P_g-(\hat\kappa_*)^{-1}P_{\ti g}\hat\kappa_*$$
depend continuously on $\ti g$ (with repsect to the $C^k$-topology of metrics).
\end{definition}

\subsection{Invertibility on $\mS^{n-1}\times \mR$}

Let $P$ be a conformally covariant elliptic operator of order $k$
and of bidegree $((n-k)/2,(n+k)/2)$. For $(M,g)=\mS^{n-1}\times \mR$, the operator
$P_g$ is a self-adjoint operator $H^k\subset L^2\to L^2$ (see Lemma~\ref{b-lemma} 
and the comments thereafter). 

\begin{definition}\label{def.invsphere}
We say that \emph{$P$ is invertible on
$\mS^{n-1}\times \mR$} if $P_g$ is an invertible operator
$H^k\to L^2$ where~$g$ is the standard product metric
on $\mS^{n-1}\times \mR$. In order words
there is a constant $\si>0$ such that
the spectrum of $P_g:\Gamma_{H^k}(V_g)\to\Gamma_{L^2}(V_g)$
is contained in $(-\infty,-\sigma]\cup [\sigma,\infty)$ for any $g\in U$.
In the following, the largest such $\si$ will be called $\si_P$.
\end{definition}

We conjecture that any conformally covariant elliptic operator of order $k$ and
of bidegree $((n-k)/2,(n+k)/2)$ with $k<n$ is invertible
on $\mS^{n-1}\times \mR$.

\subsection{Examples}\ \\
{\it Example 1:} The Conformal Laplacian\\
Let
 $$L_g:= \Delta_g + \frac{n-2}{4(n-1)}\Scal_g,$$
be the conformal Laplacian. It acts on functions on a Riemannian manifold $(M,g)$, i.e. $V_g$ is the trivial real line bundle
$\underline{\mR}$.
Let $\iota:(M_1,g_1)\hookrightarrow (M_2,g_2)$ be a conformal
embedding. Then we can choose $\kappa:= \Id: \iota^*V_{g_2}\to V_{g_1}$
and formula \eref{eq.conf.trafo.rig} holds for $k=2$
(see e.g.\cite[Section 1.J]{besse:87}).
All coefficients of $L_g$ depend continuously on $g$ in the $C^2$-topology.
Hence $L$ is a conformally covariant elliptic operator of order $2$
and of bidegree $((n-2)/2,(n+2)/2)$.

The scalar curvature of $\mS^{n-1}\times \mR$ is $(n-1)(n-2)$.
Hence the spectrum of $L_g$ on $\mS^{n-1}\times \mR$ of $L_g$ coincides with the essential spectrum of $L_g$
and is $[\si_{L},\infty)$ with $\si_L:=(n-2)^2/4$. 
Hence $L$ is invertible on $S^{n-1}\times \mR$ if (and only if) $n>2$.

{\it Example 2:} The Paneitz operator

﻿Let $(M,g)$ be a smooth, compact Riemannian manifold of dimension $n\ge5$.
The Paneitz operator $P_g$ is given by
$$P_gu=(\Delta_g)^2 u-{\rm div}_g (A_g\,du)+\frac{n-4}{2}Q_g u$$
where
  $$A_g:= \frac{(n-2)^2+4}{2(n-1)(n-2)}\Scal_gg-\frac{4}{n-2}{\rm \Ric}_g,$$
  $$Q_g=\frac{1}{2(n-1)}\Delta_g\Scal_g+ \frac{n^3-4n^2+16n-16}{8(n-1)^2(n-2)^2}\Scal_g^2-\frac{2}{(n-2)^2}|\Ric_g|^2.$$
This operator was defined by Paneitz in the case $n=4$, and it was generalized
by Branson in \cite{branson:87} to arbitrary dimensions $\geq 4$. We also refer
to Theorem 1.21 of the overview article \cite{branson:85}. The explicit
formula presented above can be found e.g.\ in \cite{hebey.robert:01}.
The coefficients of $P_g$ depend continuous on $g$ in the $C^4$-topology

As in the previous example we can choose for $\kappa$ the identity, and
then the Paneitz operator $P_g$ is a conformally
covariant elliptic operator of order $4$ and of bidegree $((n-4)/2,(n+4)/2)$.

On $\mS^{n-1}\times\mR$ one calculates
  $$A_g:= {(n-4)n\over 2} \Id + 4 \pi_\mR>0$$
where $\pi_\mR$ is projection to vectors parallel to $\mR$.
  $$Q_g:= {(n-4)n^2\over 8}.$$
We conclude
   $$\si_P=Q= {(n-4)n^2\over 8}$$
and $P$ is invertible on $\mS^{n-1}\times \mR$
if (and only if) $n>4$.

{\it Examples 3:} The Dirac operator.

Let $\ti g= f^2 g$. Let $\Sigma^gM$ resp.\ $\Sigma^{\ti g}M$ be the spinor
bundle of $(M,g)$ resp. $(M,\ti g)$. Then there is a fiberwise isomorphism
$\beta_{\ti g}^g:\Sigma^g M\to \Sigma^{\ti g}M$, preserving the norm such that
  $$D_{\ti g}\circ \beta_{\ti g}^g(\phi)= f^{-\frac{n+1}2}
    \beta_{\ti g}^g\circ D_{g}\left(f^{\frac{n-1}2}\phi\right),$$
see  \cite{hitchin:74,baum:81,hijazi:01} for details.
Furthermore, the cocycle conditions
$$\beta^g_{\ti g} \circ\beta^{\ti g}_g= \Id\qquad\mbox{and}\qquad \beta^{\hat g}_g\circ \beta^{\ti g}_{\hat g}\circ \beta^g_{\ti g}=\Id$$
hold for conformal metrics $g$, $\ti g$ and $\hat g$.
We will hence use the map $\beta^g_{\ti g}$ to identify $\Si^gM$ with
$\Si^{\ti g}M$. Hence we simply get
\begin{equation}\label{eq.conf.trafo.dir}
D_{\ti g}\phi = f^{-\frac{n+1}2}
    \circ D_{g} \left( f^{\frac{n-1}2}\phi\right).
\end{equation}
All coefficients of $D_g$ depend continuously on $g$ in the $C^1$-topology.
Hence $D$ is a conformally covariant elliptic operator of order $1$
and of bidegree $((n-1)/2,(n+1)/2)$.

The Dirac operator on $\mS^{n-1}\times \mR$
can be decomposed as $D_{\rm vert}+ D_{\rm hor}$, where the first part 
is the sum over the derivations (and Clifford multiplication) 
along $\mS^{n-1}$ and where $D_{\rm hor}=\pa_t \cdot\na_{\pa_t}$, where
$\pa_t \cdot$ is Clifford multiplication with $\pa_t$, $t\in\mR$.
$D_{\rm vert}$ and $D_{\rm hor}$ anticommute. The spectrum of $D_{\rm vert}$
is just the spectrum of the Dirac operator on $\mS^{n-1}$, and hence we 
see with  \cite{baer:96a}
  $${\rm spec} D_{\rm vert}= \{\pm \left(\frac{n-1}{2}+k\right)\,|\,k\in \mN_0\}.$$
The operator $(D_{\rm hor})^2$ is the ordinary Laplacian on $\mR$ 
and hence has spectrum $[0,\infty)$. Together this implies that the 
spectrum of the Dirac operator on $\mS^{n-1}\times \mR$ 
is $(-\infty,-\si_D]\cup [\si_D,\infty)$ with $\si_D=\frac{n-1}{2}$.

Hence $D$ is invertible on $S^{n-1}\times \mR$ if (and only if) $n>1$.

In the case $n=2$ these statements are only correct if the circle
$\mS^{n-1}=\mS^1$
carries the spin structure induced from the ball. In our article all 
circles $\mS^1$ carry this bounding spin structure due to the geometry 
of the asymptotically cylindrical blowups. 

{\it Example 4:} The Rarita-Schwinger operator and many other Fegan type
operators are conformally covariant elliptic
operators of order $1$ and of bidegree
$((n-1)/2,(n+1)/2)$. See \cite{fegan:76} and in the work of T.~Branson for
more information.

{\it Example 5:} Assume that $(M,g)$ is a Riemannian spin manifold that
carries a vector bundle
$W\to M$ with metric and metric connection. Then there is a natural
first order operator $\Gamma(\Si M\otimes W)\to \Gamma(\Si M\otimes W)$, the
\emph{Dirac operator twisted by $W$}. This operator has similar properties
as conformally covariant elliptic operators of order $1$ and of bidegree
$((n-1)/2,(n+1)/2)$. The methods of our article can be easily adapted in order
to show that Theorem~\ref{theo.main} is also true for this
twisted Dirac operator. However, twisted Dirac operators are not
``conformally covariant elliptic operators'' in the above sense. They could
have been included in this class by replacing the category $\Riemspin$
by a category of Riemannian spin manifolds with twisting bundles.
In order not to overload the formalism we chose not to present these larger categories.

The same discussion applies to the ${\rm spin^c}$-Dirac operator of a
${\rm spin^c}$-manifold.

\section{Asymptotically cylindrical blowups}

\subsection{Convention}
{\it From now on we suppose that $P_g$ is a conformally covariant elliptic
operator of order $k$, of bidegree $((n-k)/2,(n+k)/2)$,
acting on manifolds of dimension $n$ and invertible on $\mS^{n-1}\times \mR$.}

\subsection{Definition of the metrics}
Let $g_0$ be a Riemannian metric on a compact manifold $M$.
We can suppose that the injectivity radius in a fixed point $y\in M$ is
larger than $1$. The geodesic distance from $y$ to $x$ is denoted by $d(x,y)$.

We choose a smooth
function $F_\infty:M\setminus\{y\}\to [1,\infty)$ such
such that $F_\infty(x)= 1$ if $d(x,y)\geq 1$, $F_\infty(x)\leq 2$ if
$d(x,y)\geq 1/2$ and
such that $F_\infty(x)=d(x,y)^{-1}$ if $d(x,y)\in (0,1/2]$.
Then for $L\geq 1$  we define  $F_L$ to be a smooth positive
function on $M$, depending only on $d(x,y)$, such that
$F_L(x)= F_\infty(x)$ if $d(x,y)\geq e^{-L}$ and
$F_L(x)\leq d(x,y)^{-1}=F_\infty(x)$ if $d(x,y)\leq e^{-L}$.

For any $L\geq 1$ or $L=\infty$ set  $g_L:= F_L^2 g_0$.
The metric $g_\infty$ is a
complete metric on $M_\infty$.

The family of metrics $(g_L)$ is called an \emph{asymptotically cylindrical
blowup}, in the literature it is denoted as a family of
\emph{Pinocchio metrics} \cite{ammann.baer:00},
see also Figure~\ref{fig.pinocchio}.

\begin{figure}[htb]
\begin{center}
\includegraphics{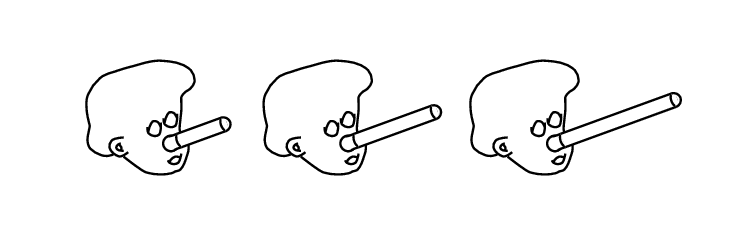}
\ignorethis{
\psset{unit=.25mm}
\psset{linewidth=1pt}
\begin{pspicture}(350,80)
\pscurve(35,19)(45,11.5)(50,8.5)(60,7)(68,8)(70,9)(73,10)(78,20)(79,25)(77,33)(74,34)(73,37)(73,40)(74,50)(74,58)(74,59)(75,61)(77,63)(80,65)(83,70)(80,78)(60,84)(40,80)(30,77)(20,73)(10.5,60)(10,50)(11.5,42)(12,40)(11.5,37)(10.5,30)(12,25)(20,20)(24,19.5)(28,20.5)

\psecurve(64,20)(65,16.5)(66,15)(69,14)(72,15)(73.5,18.5)(74,21)(74.3,25)
\pscurve(66,15)(67,17)(70,19)(73.5,18.5)
\pscurve(62,45)(62.8,41)(67,39)(70,40)(71,43)(71.2,45)(71.2,48)
\pscurve(62.7,42)(66,49)(69,48.5)(71.2,45)
\pscurve(51,39)(51.8,35)(56,33)(59,34)(60,37)(60.2,39)(60.2,42)
\pscurve(51.7,36)(55,43)(58,42.5)(60.2,39)

\pscircle(67,28){6}

\pscurve(38,22)(35,20)(32,19.2)(25.5,25)(27,30)(35,32)
\pscurve(35,23)(31.5,23.5)(30,25)(31,27)(33,28)
\pscurve(31.9,27.2)(33.5,25)(33,23.2)



\rput{290}(67,28){
\scalebox{6}{
\psset{linewidth=.166pt}
\psframe*[linecolor=white](-.8,0)(.8,1)
\psecurve(-1.2,0)(-.9,0)(-.8,.3)(-.8,1)(-.8,1.5)
\psecurve(1.2,0)(.9,0)(.8,.3)(.8,1)(.8,1.5)
\psellipse(0,1)(.8,.4)
\psframe*[linecolor=white](-.8,1)(.8,6.3)
\psline(-.8,1)(-.8,6.3)
\psline(.8,1)(.8,6.3)
\psellipse(0,6.3)(.8,.4)
\psframe*[linecolor=white](-.8,6.3)(.8,6.7)
\psarc(0,6.3){.8}{0}{180}
}
}

\rput(120,0)
{
\pscurve(35,19)(45,11.5)(50,8.5)(60,7)(68,8)(70,9)(73,10)(78,20)(79,25)(77,33)(74,34)(73,37)(73,40)(74,50)(74,58)(74,59)(75,61)(77,63)(80,65)(83,70)(80,78)(60,84)(40,80)(30,77)(20,73)(10.5,60)(10,50)(11.5,42)(12,40)(11.5,37)(10.5,30)(12,25)(20,20)(24,19.5)(28,20.5)

\psecurve(64,20)(65,16.5)(66,15)(69,14)(72,15)(73.5,18.5)(74,21)(74.3,25)
\pscurve(66,15)(67,17)(70,19)(73.5,18.5)
\pscurve(62,45)(62.8,41)(67,39)(70,40)(71,43)(71.2,45)(71.2,48)
\pscurve(62.7,42)(66,49)(69,48.5)(71.2,45)
\pscurve(51,39)(51.8,35)(56,33)(59,34)(60,37)(60.2,39)(60.2,42)
\pscurve(51.7,36)(55,43)(58,42.5)(60.2,39)

\pscircle(67,28){6}

\pscurve(38,22)(35,20)(32,19.2)(25.5,25)(27,30)(35,32)
\pscurve(35,23)(31.5,23.5)(30,25)(31,27)(33,28)
\pscurve(31.9,27.2)(33.5,25)(33,23.2)



\rput{290}(67,28){
\scalebox{6}{
\psset{linewidth=.166pt}
\psframe*[linecolor=white](-.8,0)(.8,1)
\psecurve(-1.2,0)(-.9,0)(-.8,.3)(-.8,1)(-.8,1.5)
\psecurve(1.2,0)(.9,0)(.8,.3)(.8,1)(.8,1.5)
\psellipse(0,1)(.8,.4)
\psframe*[linecolor=white](-.8,1)(.8,6.3)
\psline(-.8,1)(-.8,10.3)
\psline(.8,1)(.8,10.3)
\psellipse(0,10.3)(.8,.4)
\psframe*[linecolor=white](-.8,10.3)(.8,10.7)
\psarc(0,10.3){.8}{0}{180}
}
}
}
\rput(260,0)
{
\pscurve(35,19)(45,11.5)(50,8.5)(60,7)(68,8)(70,9)(73,10)(78,20)(79,25)(77,33)(74,34)(73,37)(73,40)(74,50)(74,58)(74,59)(75,61)(77,63)(80,65)(83,70)(80,78)(60,84)(40,80)(30,77)(20,73)(10.5,60)(10,50)(11.5,42)(12,40)(11.5,37)(10.5,30)(12,25)(20,20)(24,19.5)(28,20.5)

\psecurve(64,20)(65,16.5)(66,15)(69,14)(72,15)(73.5,18.5)(74,21)(74.3,25)
\pscurve(66,15)(67,17)(70,19)(73.5,18.5)
\pscurve(62,45)(62.8,41)(67,39)(70,40)(71,43)(71.2,45)(71.2,48)
\pscurve(62.7,42)(66,49)(69,48.5)(71.2,45)
\pscurve(51,39)(51.8,35)(56,33)(59,34)(60,37)(60.2,39)(60.2,42)
\pscurve(51.7,36)(55,43)(58,42.5)(60.2,39)

\pscircle(67,28){6}

\pscurve(38,22)(35,20)(32,19.2)(25.5,25)(27,30)(35,32)
\pscurve(35,23)(31.5,23.5)(30,25)(31,27)(33,28)
\pscurve(31.9,27.2)(33.5,25)(33,23.2)



\rput{290}(67,28){
\scalebox{6}{
\psset{linewidth=.166pt}
\psframe*[linecolor=white](-.8,0)(.8,1)
\psecurve(-1.2,0)(-.9,0)(-.8,.3)(-.8,1)(-.8,1.5)
\psecurve(1.2,0)(.9,0)(.8,.3)(.8,1)(.8,1.5)
\psellipse(0,1)(.8,.4)
\psframe*[linecolor=white](-.8,1)(.8,6.3)
\psline(-.8,1)(-.8,14.3)
\psline(.8,1)(.8,14.3)
\psellipse(0,14.3)(.8,.4)
\psframe*[linecolor=white](-.8,14.3)(.8,14.7)
\psarc(0,14.3){.8}{0}{180}
}
}
}
\end{pspicture}
}
\end{center}
\caption{Asymptotically cylindrical metrics $g_L$ (alias Pinocchio metrics)
with growing nose length $L$.}\label{fig.pinocchio}
\end{figure}

\subsection{Eigenvalues and basic properties on $(M,g_L)$}
For the $P$-operator associated to $(M,g_L)$, $L\in \{0\}\cup [1,\infty)$
(or more exactly
its selfadjoint extension) we simply write  $P_L$ instead of $P_{g_L}$.
As $M$ is compact the spectrum of $P_L$ is discrete.

We will denote the spectrum of $P_L$ in the following way
  $$\ldots \leq \la_1^-(P_{g_L})< 0= 0 \ldots=0<
     \la_1^+(P_{g_L}) \leq \la_2^+(P_{g_L})\leq \ldots,$$
where each eigenvalue appears with the multiplicity of the multiplicity of the
eigenspace. The zeros might appear on this list or not, depending
on whether $P_{g_L}$ is invertible or not.
The spectrum might be entirely positive
(for example the conformal Laplacian $Y_g$ on the sphere)
in which case $\la_1^-(P_{g_L})$ is not defined. Similarly,
$\la_1^+(P_{g_L})$ is not defined if the spectrum of $(P_{g_L})$ is negative.

\subsection{The asymptotic analysis of $(M_\infty, g_\infty)$}
The asymptotic analysis of non-compact manifolds as
$(M_\infty, g_\infty)$ is more complicated than
in the compact case. Nevertheless  $(M_\infty, g_\infty)$ is an
asymptotically cylindrical manifolf for there exists nowadays an
extensive literature. We will need only very few of these properties that will
be summarized in this subsection. Proofs will only be sketched.

Different approaches can be used for the proof.
The following lemma shows that $(M_\infty, g_\infty)$
carries a $b$-structure in the sense of Melrose.
``Manifolds with $b$-structures'' form a subclass of
``Manifolds with a Lie structure at infinity'', also called ``Lie  manifolds''
\cite{ammann.lauter.nistor:04}, \cite{ammann.lauter.nistor:07},
\cite{ammann.ionescu.nistor:06}.
We choose to use Melrose's $b$-calculus \cite{melrose:93} in this article as this
calculus is more widely known, but similar statements hold in the larger category
of ``Manifolds with a Lie structure at infinity''.

\begin{lemma}\label{b-lemma}
The manifold $(M_\infty,g_\infty)$ is an exact $b$-metric in the sense of
\cite[Def. 2.8]{melrose:93}.
\end{lemma}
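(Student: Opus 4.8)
The plan is to exhibit, near the puncture $y$, an explicit diffeomorphism that turns $(M_\infty,g_\infty)$ into a manifold with boundary equipped with an exact $b$-metric, and then to observe that away from $y$ nothing needs to be checked since there the metric is just the compact metric $g_0$. First I would work in geodesic polar coordinates around $y$: on the punctured ball $B_y(1/2)\setminus\{y\}$ we may write $g_0=dr^2+r^2 h(r)$, where $r=d(\,\cdot\,,y)$ and $h(r)$ is, for each $r$, a metric on $\mS^{n-1}$ depending smoothly on $r$ with $h(0)=\gcan$ the round metric (this is the standard expansion of a metric in normal coordinates). Since $F_\infty(x)=r^{-1}$ on this region, we get
\begin{equation}
g_\infty = F_\infty^2\, g_0 = \frac{dr^2}{r^2} + h(r)
\end{equation}
on $B_y(1/2)\setminus\{y\}$.

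Next I would introduce the new coordinate $t$ by $r=e^{-t}$, so that $t\to+\infty$ as $r\to 0^+$, and set $x:=e^{-t}=r$ as a boundary-defining function for the manifold-with-boundary $\ol{M_\infty}$ obtained by adjoining a copy of $\mS^{n-1}$ at $x=0$. Then $dr/r = -dt = dx/x$, hence
\begin{equation}
g_\infty = dt^2 + h(e^{-t}) = \frac{dx^2}{x^2} + h(x).
\end{equation}
I would then check against Melrose's definition of an exact $b$-metric \cite[Def.~2.8]{melrose:93}: one needs $g_\infty$ to be a smooth symmetric $2$-tensor on $\ol{M_\infty}$, nondegenerate up to the boundary as a $b$-metric, and of the form $g_\infty = a\,\frac{dx^2}{x^2} + \text{(cross terms)} + k$ where $a\to 1$ and $k$ restricts to a metric on the boundary, with the ``exact'' condition that $g_\infty - \left(\frac{dx}{x}\right)^2 \otimes(\cdots)$ extends smoothly — more precisely that $g_\infty = \left(\frac{dx}{x}\right)^2 + g'$ with $g'$ a smooth symmetric $2$-tensor on $\ol{M_\infty}$ (restricting to a metric on $T\partial\ol{M_\infty}$ at $x=0$). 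Here $g' = h(x)$, which is manifestly smooth in $x$ up to $x=0$ because $h$ is smooth in $r$, and $h(0)=\gcan$ is a genuine metric on $\mS^{n-1}$; the coefficient of $(dx/x)^2$ is exactly $1$, so the metric is not merely $b$ but exact $b$. On the complement of a neighborhood of $y$, $g_\infty = g_0$ is a smooth metric on a compact piece with no boundary, so there is nothing to verify, and the two descriptions patch together smoothly on the overlap annulus $e^{-L}\le r\le 1/2$ where, by construction, $F_L = F_\infty$ and the formulas agree.

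I do not expect a serious obstacle here; this is essentially an unwinding of definitions, and the only point requiring a little care is the smoothness of $h(r)$ up to $r=0$ together with $h(0)=\gcan$, which is the classical statement that in geodesic normal coordinates the metric has a smooth polar expansion with round leading term — this can be quoted from any reference on normal coordinates (or derived from the Gauss lemma plus smoothness of the exponential map). A secondary, equally routine point is to state precisely which $b$-defining function and which compactification $\ol{M_\infty}$ one uses, so that the subsequent spectral statements (that $P_{g_\infty}$, and $P_g$ on $\mS^{n-1}\times\mR$, are self-adjoint $H^k\to L^2$ operators) can invoke the standard $b$-calculus machinery of \cite{melrose:93}; but the content of the lemma itself is just the coordinate computation above.
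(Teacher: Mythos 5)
Your proof is correct and is essentially the paper's own argument made explicit: both attach a boundary sphere at the puncture via geodesic normal/polar coordinates and observe that $r^{-2}g_0 = (dr/r)^2 + h(r)$ with $h$ smooth up to $r=0$ and $h(0)=\gcan$, which is exactly Melrose's exact $b$-metric condition. The paper compresses this into the single line ``expressing the metric in normal coordinates, one sees that $d(x,y)^2g$ extends to an exact $b$-metric on $\bar M$,'' using $S_yM\times[0,\ep)$ glued via $\exp_y$ as the collar, while you carry out the computation in detail.
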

\begin{proof} Let $S_yM$ be the unit tangent bundle at $y$.
For any $X\in S_yM$ let $\ga_X$ be the geodesic with $\dot\ga(0)=X$.
Then for small $\ep>0$ the map
  $\Phi:S_yM\times [0,\ep)\to M,\qquad (X,t)\mapsto\ga_X(t)$ is smooth
and a diffeomorphism from $S_yM\times (0,\ep)$ to $B_y(\ep)\setminus\{0\}$.
We define $\bar M:=M_\infty\cup :S_yM\times [0,\ep)/\sim$ where
$\sim$ indicates that $(X,t)\in S_yM\times (0,\ep)$ is glued together with
$\Phi(X,t)$. $\bar M$ is a manifold with boundary $S_yM$ and interior
$M_\infty$. Expressing the metric in normal coordinates, one sees that
$d(x,y)^2g$ extends to an exact $b$-metric on $\bar M$.
\end{proof}

From this observation may properties already follow with standard arguments 
similarly as in the compact case.
The operator $P_{g_\infty}$ has a
self-adjoint extension,
denoted by $P_\infty$.
The essential spectrum of $P_\infty$ coincides with
the essential spectrum of the $P$-operator on the standard cylinder
$\mS^{n-1}\times \mR$ which is contained in
$(-\infty,-\si_P]\cup [\si_P,\infty)$.
Hence the spectrum of $P_\infty$ in the interval $(-\si_P,\si_P)$ is
discrete as well. Eigenvalues of $P_\infty$ in this interval will be called
small eigenvalues of $P_\infty$.
Similarly we use the notation $\la_j^{\pm}(P_\infty)$
for the small eigenvalues of $P_\infty$.

\begin{proposition}\label{prop.interpol}
Let $P$ be a conformally covariant elliptic operator. Then on $(M_\infty,g_\infty)$ we have 
  $$\|(\na^\infty)^s u\|_{L^2(g_\infty)}\leq C (\|u\|_{L^2(g_\infty)}+ \|P_\infty u\|_{L^2(g_\infty)})$$
for all $s\in \{0,1,\ldots,k\}$.
\end{proposition}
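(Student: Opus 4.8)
\textbf{Proof strategy for Proposition~\ref{prop.interpol}.}
The plan is to prove an a priori elliptic estimate of the form
$\|u\|_{H^k(g_\infty)}\leq C(\|u\|_{L^2(g_\infty)}+\|P_\infty u\|_{L^2(g_\infty)})$,
which immediately gives the claim since $\|(\na^\infty)^s u\|_{L^2}\leq \|u\|_{H^k}$ for $s\leq k$.
The estimate is a standard consequence of ellipticity on compact manifolds, so the only real issue is the non-compactness of $M_\infty$, i.e.\ the cylindrical end.
I would handle this by a partition-of-unity argument adapted to the geometry: split $M_\infty$ into the (relatively compact) core piece $\{d(x,y)\geq \ep\}$, where the metric $g_\infty$ together with all its derivatives is uniformly controlled, and the end, which by Lemma~\ref{b-lemma} and the explicit form of $g_\infty$ near $y$ is (quasi-isometrically, with uniformly bounded geometry) modeled on the half-cylinder $\mS^{n-1}\times[0,\infty)$ with the product metric.

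First I would cover $M_\infty$ by countably many coordinate balls $U_j$ of a fixed size (measured in $g_\infty$), with a subordinate partition of unity $\{\chi_j\}$ such that all derivatives $|(\na^\infty)^m\chi_j|$ are bounded independently of $j$ and such that the $U_j$ have bounded overlap; this is possible precisely because $(M_\infty,g_\infty)$ has bounded geometry, which follows from the $b$-metric structure (on the end the metric is asymptotic to the standard product metric, whose curvature and injectivity radius are obviously uniformly controlled).
On each $U_j$ the operator $P_\infty$ is a genuine elliptic operator of order $k$ with coefficients bounded in $C^0$ (indeed, by the conformal covariance formula \eref{eq.conf.trafo} applied to $g_\infty=F_\infty^2 g_0$, and the $C^k$-continuity of the coefficients, the coefficients of $P_\infty$ in these normalized charts are uniformly bounded), and standard interior elliptic regularity gives
$\|\chi_j u\|_{H^k}\leq C(\|P_\infty(\chi_j u)\|_{L^2}+\|\chi_j u\|_{L^2})$
with $C$ independent of $j$.
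Then I would commute $P_\infty$ past $\chi_j$: the commutator $[P_\infty,\chi_j]$ is a differential operator of order $k-1$ with uniformly bounded coefficients, so $\|P_\infty(\chi_j u)\|_{L^2}\leq \|\chi_j P_\infty u\|_{L^2}+C\|u\|_{H^{k-1}(U_j)}$.
Summing over $j$, using bounded overlap and $\sum_j\chi_j^2\geq c>0$ (or just $\sum\chi_j=1$ with a Cauchy--Schwarz step), yields
$\|u\|_{H^k}^2\leq C(\|P_\infty u\|_{L^2}^2+\|u\|_{H^{k-1}}^2)$.

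Finally I would remove the lower-order term $\|u\|_{H^{k-1}}$ on the right by an interpolation/absorption argument: by the interpolation inequality $\|u\|_{H^{k-1}}\leq \de\|u\|_{H^k}+C(\de)\|u\|_{L^2}$ (valid uniformly on a manifold of bounded geometry, again via the local charts), choosing $\de$ small lets the $H^k$ term be absorbed into the left-hand side, leaving exactly
$\|u\|_{H^k(g_\infty)}\leq C(\|u\|_{L^2(g_\infty)}+\|P_\infty u\|_{L^2(g_\infty)})$.
I expect the main obstacle to be the bookkeeping needed to make all constants uniform over the non-compact end — concretely, setting up the uniformly-locally-finite cover with controlled partition of unity and verifying that the coefficients of $P_\infty$ really are uniformly bounded in those charts.
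The $b$-metric description from Lemma~\ref{b-lemma} (equivalently, the fact that near $y$ the metric $d(x,y)^2 g_0$ extends smoothly to the boundary, so that $g_\infty$ is asymptotically the standard cylinder) is exactly what makes this bounded geometry claim routine, and once bounded geometry is in hand the rest is the familiar compact-case argument run locally and summed.
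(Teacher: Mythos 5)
Your proposal is correct, but it proves the estimate by a genuinely different route than the paper. The paper's own proof is very short: it observes that $P_{g_\infty}$ is compatible with the $b$-structure of Lemma~\ref{b-lemma}, picks a $\lambda\in\mR$ outside the spectrum of $P_\infty$ (such $\lambda$ exists because the essential spectrum lies in $(-\infty,-\si_P]\cup[\si_P,\infty)$ by the standing invertibility hypothesis, and the rest of the spectrum is discrete), then invokes a result from Melrose's book to conclude that $P_\infty-\lambda\colon H^k\to L^2$ is an isomorphism, from which the estimate follows by the triangle inequality. Your argument instead establishes the a priori estimate directly, without ever needing $P_\infty-\lambda$ to be invertible: you extract bounded geometry from the $b$-structure, build a uniformly locally finite cover with a controlled partition of unity, apply interior elliptic estimates with uniform constants (the uniformity of the coefficients following from the conformal covariance identity $P_{g_\infty}=F_\infty^{-(n+k)/2}P_{g_0}F_\infty^{(n-k)/2}$, the $C^k$-continuity of the coefficient functor, and the fact that in $g_\infty$-normalized charts the metric is close to the product cylinder), sum, and absorb the $H^{k-1}$ remainder by interpolation. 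What the paper's route buys is brevity (one citation does all the analytic work) at the price of invoking the full $b$-calculus machinery and, implicitly, the spectral gap coming from invertibility on $\mS^{n-1}\times\mR$. What your route buys is self-containedness and slightly more generality: it only uses bounded geometry, so it would apply even without the invertibility hypothesis on the model cylinder. The one place you should tighten the write-up is the claim that the coefficients of $P_\infty$ are uniformly bounded in the normalized charts: as stated it reads as if it were an immediate consequence of~\eref{eq.conf.trafo} alone, whereas the honest justification (which you do gesture at) is that in the logarithmic coordinate $t=-\log d(\cdot,y)$ the metric $g_\infty$ converges in $C^\infty$ to the product metric on $\mS^{n-1}\times[0,\infty)$, so the $C^k$-continuity of the coefficient functor applied on these fixed-size charts gives the uniform bound.
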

\begin{proof}
Choose a  $\lambda \in\mR$ that is not in the spectrum of $P$.
The continuity of the coefficients of $P$ together with the fact that
$P_{g_0}$ extends to $M$ implies that $P_{g_\infty}$ is an operator
compatible with the $b$-structure. Hence we we can apply
\cite[Proposition]{melrose:93} for $Q:=P-\la$. We see that $P-\la$ is an
isomorphism from $H^k$ to $L^2$.
Hence a constant $C>0$ exists with
  $$C\|(P-\la)u\|_{L^2}\geq \| u\|_{H^k}.$$
using the triangle inequality we get
  $$\| u\|_{H^k}\leq  C \la \|u\|_{L^2(g_\infty)}+ \|P_\infty u\|_{L^2(g_\infty)}$$
which is equivalent to the statement.
\end{proof}

\subsection{The kernel}
Having recalled these previously known facts we will now
study the kernel of the conformally covariant operators.

If $g$ and $\ti g=f^2$ are conformal metrics on a compact manifold $M$, then 
  $$\phi \mapsto  f{-\frac{n-k}2} \phi$$
obviously defines an isomorphism from $\ker P_g$ to $\ker P_{\ti g}$.
It is less obvious that a similar statement holds if we compare  $g_0$ and $g_\infty$ defined before: 

\begin{proposition}
The map
\begin{eqnarray*}
\ker P_0&\to & \ker P_\infty\\
  \phi_0& \mapsto & \phi_\infty= F_\infty^{-\frac{n-k}2} \phi_0
\end{eqnarray*}
is an isomorphism of vector spaces.
\end{proposition}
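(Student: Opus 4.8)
The map $\phi_0\mapsto F_\infty^{-(n-k)/2}\phi_0$ is the natural conformal-covariance identification, so the content of the statement is that it takes the kernel of $P_0$ (on the compact manifold $(M,g_0)$) bijectively onto the kernel of $P_\infty$ (on the complete manifold $(M_\infty,g_\infty)$). The plan is to prove this in two directions. For the ``$\to$'' direction, suppose $\phi_0\in\ker P_0$. Away from $y$ the metrics $g_0$ and $g_\infty$ agree up to the conformal factor $F_\infty$, and the conformal covariance property \eref{eq.conf.trafo} (together with the identification of spinor/vector bundles discussed in subsection~\ref{subsec.cov.conf}) immediately gives $P_\infty\phi_\infty=0$ on $M_\infty\setminus\{y\}=M\setminus\{y\}$. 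It remains to check that $\phi_\infty\in L^2(g_\infty)$, i.e.\ that no mass escapes down the infinite cylindrical end. Since $\phi_0$ is smooth on all of $M$, it is bounded near $y$, and on $B_y(1/2)\setminus\{y\}$ one has $F_\infty(x)=d(x,y)^{-1}$, so $|\phi_\infty|_{g_\infty}=F_\infty^{-(n-k)/2}|\phi_0|_{g_0}\lesssim d(x,y)^{(n-k)/2}$; combined with $dv^{g_\infty}=F_\infty^n\,dv^{g_0}=d(x,y)^{-n}\,dv^{g_0}$, the integral $\int|\phi_\infty|^2\,dv^{g_\infty}$ near $y$ is controlled by $\int_{B_y(1/2)} d(x,y)^{-n-(k-n)}\,dv^{g_0}=\int d(x,y)^{-k}\,dv^{g_0}<\infty$ since $k<n$. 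Hence $\phi_\infty$ is a genuine $L^2$ element of $\ker P_\infty$.

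For the ``$\leftarrow$'' direction, suppose $\psi_\infty\in\ker P_\infty$, so $\psi_\infty\in L^2(g_\infty)$ and $P_\infty\psi_\infty=0$. By Proposition~\ref{prop.interpol}, $\psi_\infty\in H^k(g_\infty)$, and then elliptic regularity on the $b$-manifold $\bar M$ gives $\psi_\infty\in C^\infty(M_\infty)$. Set $\psi_0:=F_\infty^{(n-k)/2}\psi_\infty$ on $M\setminus\{y\}$; again \eref{eq.conf.trafo} shows $P_0\psi_0=0$ on $M\setminus\{y\}$. The key point is to show that the singularity at $y$ is removable, and this is exactly what Lemma~\ref{lem.sing.rem} is designed for. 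Using \eref{cond.rem.two} it suffices to check $\int_{B_y(\ep_0)}|\psi_0|_{g_0}^2\,r^{-k}\,dv^{g_0}<\infty$ with $r=d(\cdot,y)$ (and $k\le n$ holds). On $B_y(1/2)\setminus\{y\}$ we have $|\psi_0|_{g_0}^2=F_\infty^{-(n-k)}|\psi_\infty|_{g_\infty}^2=r^{n-k}|\psi_\infty|_{g_\infty}^2$ and $dv^{g_0}=r^{n}\,dv^{g_\infty}$, so
$$\int_{B_y(1/2)}|\psi_0|_{g_0}^2\,r^{-k}\,dv^{g_0}=\int_{B_y(1/2)}r^{n-k}\,|\psi_\infty|_{g_\infty}^2\,r^{-k}\,r^{n}\,dv^{g_\infty}=\int_{B_y(1/2)}r^{2n-2k}|\psi_\infty|_{g_\infty}^2\,dv^{g_\infty},$$
and since $r\le 1$ near $y$ and $2n-2k>0$ this is bounded by $\|\psi_\infty\|_{L^2(g_\infty)}^2<\infty$. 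Thus Lemma~\ref{lem.sing.rem} applies and $\psi_0$ extends to a strong solution of $P_0\psi_0=0$ on all of $M$, i.e.\ $\psi_0\in\ker P_0$. The two maps are mutually inverse by construction (using the cocycle property of the bundle identifications), and both are clearly linear, so this establishes the isomorphism.

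The routine parts are the bookkeeping of conformal weights in volume forms and pointwise norms, which I have indicated above. The step requiring the most care — the main obstacle — is the removability of the singularity at $y$ in the ``$\leftarrow$'' direction: one must make sure that membership of $\psi_\infty$ in $L^2(g_\infty)$, upgraded by Proposition~\ref{prop.interpol} and $b$-elliptic regularity to smoothness on $M_\infty$, really does translate into the integrability hypothesis \eref{cond.rem.two} for $\psi_0$ near $y$, and that the crucial inequality $k<n$ is what makes the exponent $2n-2k$ positive. One should also double-check that the bundle identification $\kappa$ (resp.\ $\beta^{g_0}_{g_\infty}$ in the Dirac case) used to make sense of ``$P_0\psi_0$'' on $M\setminus\{y\}$ is precisely the one encoded in the conformal-covariance functor, so that \eref{eq.conf.trafo} can be invoked without ambiguity; this is a consistency check rather than a hard estimate, but it is where the categorical setup of subsection~\ref{subsec.cov.conf} earns its keep.
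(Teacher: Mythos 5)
Your approach is the same as the paper's: transport kernel elements via conformal covariance, check $L^2$-membership directly for the map $\phi_0\mapsto\phi_\infty$, and use the removal-of-singularities Lemma~\ref{lem.sing.rem} through condition~\eref{cond.rem.two} for the inverse map. The forward-direction bookkeeping is correct.

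In the reverse direction, however, your exponent is off by a sign. From $\psi_0=F_\infty^{(n-k)/2}\psi_\infty$ and the fact that the bundle identification preserves fiberwise length, one gets $|\psi_0|^2=F_\infty^{\,n-k}|\psi_\infty|^2$, which near $y$ (where $F_\infty=r^{-1}$) is $r^{k-n}|\psi_\infty|^2$, not $r^{n-k}|\psi_\infty|^2$ as you wrote. Redoing the exponents,
\begin{equation*}
\int_{B_y(1/2)}|\psi_0|^2\,r^{-k}\,dv^{g_0}
=\int_{B_y(1/2)}r^{k-n}|\psi_\infty|^2\,r^{-k}\,r^{n}\,dv^{g_\infty}
=\int_{B_y(1/2)}|\psi_\infty|^2\,dv^{g_\infty},
\end{equation*}
with no factor $r^{2n-2k}$; the exponents cancel exactly, which is the paper's identity $\int_M F_\infty^k|\phi_0|^2\,dv^{g_0}=\int_{M_\infty}|\phi_\infty|^2\,dv^{g_\infty}$. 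Your conclusion still holds (the right-hand side is $\le\|\psi_\infty\|_{L^2(g_\infty)}^2<\infty$), so the error is not fatal, but the decay you extract from ``$r\le1$ and $2n-2k>0$'' is illusory, and the role you attribute to $k<n$ at that point is misplaced: the hypothesis $k\le n$ enters only through condition~\eref{cond.rem.two} itself, and $k<n$ is what is genuinely needed in the forward direction to make $\int_0^{1/2}r^{n-1-k}\,dr$ finite.
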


\begin{proof}Suppose $\phi_0\in\ker P_0$.
Using standard regularity results
it is clear that $\sup |\phi_0|<\infty$. Then
\begin{equation}
\begin{split}
\int_{M_\infty}|\phi_\infty|^2\,dv^{g_\infty} & \leq
\int_{M\setminus B_y(1/2)} |\phi_\infty|^2\,dv^{g_\infty}
+ \sup |\phi_0|^2 \int_{B_y(1/2)} F_\infty^{-(n-k)}\,dv^{g_\infty}\\
&\leq 2^k \int_{M\setminus B_y(1/2)} |\phi_0|^2\,dv^{g_0}+   \sup |\phi_0|^2
\om_{n-1}\int_0^{1/2} \frac{r^{n-1}}{r^k}\, dr<\infty.
\end{split}
\end{equation}
Furthermore, formula~\eref{eq.conf.trafo}
implies $P_\infty\phi_\infty=0$.
Hence the map is well-defined. In order to show that it is an isomorphism
we show that the obvious inverse
$\phi_\infty\mapsto \phi_0:=F_\infty^{\frac{n-k}2}\phi_\infty$
is well defined. To see
this we start with an $L^2$-section in the kernel of $P_\infty$.

We calculate
  $$\int_M F_\infty^k |\phi_0|^2\,dv^{g_0}=
   \int_{M_\infty}|\phi_\infty|^2\,dv^{g_\infty}.$$
Using again~\eref{eq.conf.trafo} we see that this section satisfies
$P_0\phi_0$ on $M\setminus\{y\}$.
Hence condition~\eref{cond.rem.two} is satisfied, and
together with the removal of singularity lemma (Lemma~\ref{lem.sing.rem})
one obtains that the inverse map is well-defined.
The proposition follows.
\end{proof}

\section{Proof of the main theorem}

\subsection{Stronger version of the main theorem}

We will now show the following theorem.
\begin{theorem}\label{theo.conv}
Let $P$ be a  conformally covariant elliptic
operator of order $k$, of bidegree $((n-k)/2,(n+k)/2)$,
on manifolds of dimension $n>k$. We assume that $P$ is invertible on $\mS^{n-1}\times\mR$.

If $\liminf_{L\to \infty} |\la_j^\pm(P_L)|< \si_P$, then
  $$\la_j^\pm(P_L) \to \la_j^\pm(P_\infty)\in \left(-\si_P, \si_P\right)\qquad\mbox{for }L\to \infty.$$
\end{theorem}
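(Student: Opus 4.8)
The plan is to establish the convergence $\la_j^\pm(P_L)\to\la_j^\pm(P_\infty)$ by a standard but careful spectral-convergence argument adapted to the $b$-calculus setting, exploiting that the metrics $g_L$ agree with $g_\infty$ on the fixed region $\{d(x,y)\geq e^{-L}\}$ and differ only on the shrinking ball $B_y(e^{-L})$, whose $g_L$-volume and geometry are controlled. The hypothesis $\liminf_L |\la_j^\pm(P_L)|<\si_P$ is what keeps the relevant eigenvalues separated from the essential spectrum of $P_\infty$, so that the limiting eigenvalue genuinely lies in $(-\si_P,\si_P)$.

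First I would set up comparison cutoffs: fix a large radius, use a cutoff $\chi_L$ supported away from $B_y(2e^{-L})$ and equal to $1$ outside $B_y(e^{-L})$. Given a small eigenfunction $u$ of $P_\infty$ with eigenvalue $\la=\la_j^\pm(P_\infty)$, the section $\chi_L u$ is an approximate eigensection of $P_L$: since $g_L=g_\infty$ on the support of $\chi_L$, one has $P_L(\chi_L u)=\la\chi_L u + [P_L,\chi_L]u$, and the commutator term is supported in the thin annulus $B_y(2e^{-L})\setminus B_y(e^{-L})$, where Proposition~\ref{prop.interpol} bounds $\|(\na^\infty)^s u\|_{L^2}$ for $s\le k$ and the $g_\infty$-volume of that annulus tends to $0$; hence $\|[P_L,\chi_L]u\|_{L^2(g_L)}\to 0$. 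By the spectral theorem (and the min-max principle applied to the finitely many eigenvalues of $P_L$ in a fixed compact sub-interval of $(-\si_P,\si_P)$, which is legitimate because $P_L$ has discrete spectrum and, by the $b$-calculus estimates, uniformly bounded resolvent away from $\pm\si_P$) this forces $\limsup_L \operatorname{dist}(\la,\Spec P_L)=0$, and a dimension count on the span of $\{\chi_L u_i\}$ over an $L^2$-orthonormal basis $u_1,\dots,u_m$ of the $\la$-eigenspace of $P_\infty$ shows $P_L$ has at least $m$ eigenvalues converging to $\la$. Conversely, given eigensections $v_L$ of $P_L$ with $\la_j^\pm(P_L)$ bounded inside $(-\si_P,\si_P)$, I would normalize $\|v_L\|_{L^2(g_L)}=1$, apply the analogous cutoff to push them onto $M_\infty$, use the uniform $H^k$-bound from Proposition~\ref{prop.interpol} (valid because $P_\infty v_L$ stays bounded once the cutoff error is controlled) together with Rellich compactness on compact subsets, and extract a weak-$H^k$/strong-$L^2_{\rm loc}$ limit $v_\infty$; the only way $v_\infty$ could vanish is if $L^2$-mass escaped down the growing cylinder, but a standard Agmon/decay estimate — the essential spectrum of $P_\infty$ on the cylindrical end sits outside $[-\si_P+\de,\si_P-\de]$, so eigensections with eigenvalue in that range decay exponentially along the end uniformly in $L$ — rules this out. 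Thus $v_\infty$ is a nonzero $L^2$-eigensection of $P_\infty$ with eigenvalue $\lim \la_j^\pm(P_L)$, and combining both directions with the matching of multiplicities gives $\la_j^\pm(P_L)\to\la_j^\pm(P_\infty)$.

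The main obstacle I anticipate is the uniform (in $L$) control near the essential spectrum and the no-loss-of-mass statement: one must show that eigensections of $P_L$ with eigenvalue in a fixed compact subinterval of $(-\si_P,\si_P)$ do not concentrate $L^2$-mass in the part of the neck that is being created as $L\to\infty$. This requires a uniform exponential-decay (Agmon-type) estimate on the cylindrical region, which in turn needs that the operators $P_L$ restricted to the neck are, up to uniformly small perturbations, the fixed model operator $P$ on $\mS^{n-1}\times\mR$ whose spectral gap is exactly $\si_P$ — here the hypothesis $\liminf_L|\la_j^\pm(P_L)|<\si_P$ and the invertibility-on-the-cylinder assumption are both used. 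The remaining ingredients (commutator estimates, min-max, Rellich) are routine given Proposition~\ref{prop.interpol} and the $b$-structure of Lemma~\ref{b-lemma}.
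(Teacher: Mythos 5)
Your overall plan — approximate eigensections via cutoffs in both directions, extract limits by elliptic estimates and Rellich, and rule out mass escape down the neck — is the right outline, but both halves contain real errors or gaps.

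In the easy direction, the mechanism you give for the commutator estimate is wrong. You cut off sharply over the annulus $B_y(2e^{-L})\setminus B_y(e^{-L})$ and claim its $g_\infty$-volume tends to $0$. It does not: in the metric $g_\infty=d(x,y)^{-2}g_0$ this annulus is approximately $\mS^{n-1}\times[L-\log 2,L]$, a cylinder segment of fixed length $\log 2$, so its $g_\infty$-volume stays bounded below. With a sharp cutoff over a fixed-length annulus the gradient of $\chi_L$ is $O(1)$ in $g_\infty$, so the commutator is not small because of small volume; what actually makes $\|[P_L,\chi_L]u\|_{L^2}$ go to $0$ is that $u$ and its first $k-1$ derivatives are globally $L^2$ and the annulus moves to infinity, so the relevant $L^2$-tails vanish. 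That repair works but changes the logic of your estimate. The paper instead chooses a cutoff varying slowly over a region of $g_\infty$-length $T$, so that $|(\na^\infty)^s\chi|\le C_s/T^s$; this yields $\|P_\infty(\chi\phi)-\chi P_\infty\phi\|\le kC/T\,\|\phi\|$ uniformly over any linear combination $\phi$ of eigenvectors, which is what the min-max dimension count requires.

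In the hard direction, you correctly identify the crux — eigensections of $P_L$ with eigenvalue strictly inside $(-\si_P,\si_P)$ must not concentrate in the lengthening neck — but you rely on ``a standard Agmon/decay estimate \ldots uniformly in $L$'' that you do not prove, and that is not quite standard: $(M,g_L)$ is capped at the tip, not an exact cylinder, so a uniform-in-$L$ exponential decay estimate would require its own argument. The paper sidesteps this entirely with two ideas you do not use. First, it exploits the conformal covariance to transport the eigensection of $P_{L_i}$ on $(M,g_{L_i})$ to a section $\psi_{i,\ell}=(F_{L_i}/F_\infty)^{(n-k)/2}\phi_{i,\ell}$ on the fixed manifold $(M_\infty,g_\infty)$, which satisfies $P_\infty\psi_{i,\ell}=h_{i,\ell}\psi_{i,\ell}$ with $|h_{i,\ell}|\le|\la_{i,\ell}|$; this removes all $L$-dependence from the operator and puts it into a bounded multiplier. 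Second, the no-mass-escape is obtained not by exponential decay but by a direct cutoff plus spectral-gap estimate: with $\chi$ supported outside $B_y(\de^2)$ and varying over $g_\infty$-length $|\log\de|$, one combines $\|P_\infty((1-\chi)\psi)\|\ge(\si_P-\tau(\de))\|(1-\chi)\psi\|$ with the $O(1/|\log\de|)$ commutator bound and $\|(1-\chi)P_\infty\psi\|\le|\la|\|(1-\chi)\psi\|$ to get $\|(1-\chi)\psi\|\le C/(|\log\de|(\si_P-\tau(\de)-|\la|))$, which tends to $0$ as $\de\to 0$. This is elementary, quantitatively uniform, and uses exactly the invertibility hypothesis and the strict inequality $|\la|<\si_P$, without ever needing an Agmon estimate. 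Until you supply a proof of your uniform decay claim, your argument for $\liminf$ has a genuine hole.
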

In the case $\spec(P_{g_0})\subset(0,\infty)$
the theorem only makes a statement about $\la^+_j$, and conversely
in the case that $\spec(P_{g_0})\subset(-\infty,0)$ it only makes a statement
about $\la^-_j$.

Obviously this theorem implies Theorem~\ref{theo.main}.

\subsection{The supremum part of the proof of Theorem~\ref{theo.conv}}\label{subsec.sup}

At first we prove that
\begin{equation}\label{limsup}
\limsup_{L\to \infty} (\la_j^+ (P_{L}))\leq \la_j^+(P_\infty).
\end{equation}
Let $\phi_1,\ldots,\phi_j$ be sequence of $L^2$-orthonormal
eigenvectors of $P_\infty$ to eigenvalues
$\la_1^+(P_\infty),\ldots,\la_j^+(P_\infty)\in [-\bar\la,\bar\la]$,
$\bar\la<\si_P$.
We choose a cut-off function $\chi:M\to [0,1]$ with $\chi(x)=1$
for $-\log(d(x,y))\leq T$, $\chi(y)=0$ for $-\log (d(x,y))\geq 2T$, and
$|(\na^\infty)^s\chi|_{g_\infty}\leq C_s/T^s$ for all $s\in \{0,\ldots,k\}$.

Let $\phi$ be a linear combination of the eigenvectors $\phi_1,\ldots,\phi_j$.
From Proposition~\ref{prop.interpol}
we see that
 $$\|(\na^\infty)^s\phi\|_{L^2(M_\infty,g_\infty)}\leq C \|\phi\|_{L^2(M_\infty,g_\infty)}$$
where $C$ only depends on $(M_\infty,g_\infty)$.
Hence for sufficiently large $T$
 $$\|P_\infty(\chi \phi)-\chi P_\infty \phi\|_{L^2(M_\infty,g_\infty)}
   \leq kC/T \|\phi\|_{L^2(M_\infty,g_\infty)}
   \leq 2kC/T \|\chi\phi\|_{L^2(M_\infty,g_\infty)}$$
for sufficiently large $T$ as  $\|\chi \phi\|_{L^2(M_\infty,g_\infty)}\to
\| \phi\|_{L^2(M_\infty,g_\infty)}$ for $T\to \infty$.
The section $\chi \phi$ can be interpreted as a section on $(M,g_L)$ if $L>2T$,
and on the support of $\chi\phi$ we have $g_L=g_\infty$
and $P_\infty(\chi \phi) = P_L(\chi \phi)$.
Hence standard Rayleigh quotient arguments imply that if $P_\infty$ has $m$
eigenvalues (counted with mulitplicity) in the intervall $[a,b]$ then
$P_L$ has $m$ eigenvalues in the intervall $[a-2kC/T,b+2kC/T]$.
Taking the limit $T \to \infty$ we obtain  \eref{limsup}.

By exchanging some obvious signs we obtain similarly
\begin{equation}\label{limsup.neg}
\limsup_{L\to \infty} (-\la_j^- (P_{L}))\leq -\la_j^-(P_\infty).
\end{equation}

\subsection{The infimum part of the proof of Theorem~\ref{theo.conv}}\label{subsec.inf}
We now prove
\begin{equation}\label{liminf}
\liminf_{L\to \infty} (\pm \la_j^\pm (P_L))\geq\pm \la_j^\pm (P_\infty).
\end{equation}

We assume that we have a sequence $L_i\to \infty$, and that for each $i$
we have a system of orthogonal eigenvectors
$\phi_{i,1}$, \ldots, $\phi_{i,m}$
of $P_{L_i}$, i.e.\
$P_{L_i}\phi_{i,\ell} =\la_{i,\ell} \phi_{i,\ell}$
for $\ell\in \{1,\ldots,m\}$.
Furthermore we suppose that
$\la_{i,\ell}\to \bar\la_\ell\in(-\si_P,\si_P)$ for $\ell\in\{1,\ldots,m\}$.

Then
 $$\psi_{i,\ell}:=\left(\frac{F_{L_i}}{F_\infty}\right)^{\frac{n-k}2}\phi_{i,\ell}$$
satisfies
  $$P_\infty\psi_{i,\ell} = h_{i,\ell} \psi_{i,\ell}\qquad
    \mbox{with}\qquad h_{i,\ell}:=\left(\frac{F_{L_i}}{F_\infty}\right)^k\la_{i,\ell}.$$
Furthermore
 $$\|\psi_{i,\ell}\|_{L^2(M_\infty,g_\infty)}^2
   =\int_M \left(\frac{F_{L_i}}{F_\infty}\right)^{-k} |\phi_{i,\ell}|^2\,dv^{g_{L_i}}
   \leq \sup_M  |\phi_{i,\ell}|^2 \int_M \left(\frac{F_{L_i}}{F_\infty}\right)^{-k}\,dv^{g_{L_i}}$$
Because of  $\int_M \left(\frac{F_{L_i}}{F_\infty}\right)^{-k}\,dv^{g_L}\leq C\int r^{n-1-k}\,dr<\infty$
(for $n>k$) the norm $\|\psi_{i,\ell}\|_{L^2(M_\infty,g_\infty)}$ is finite as well,
and we can renormalize such that
  $$\|\psi_{i,\ell}\|_{L^2(M_\infty,g_\infty)}=1.$$

\begin{lemma}
For any $\delta>0$ and any $\ell\in\{0,\ldots,m\}$ the sequence
  $$\Big(\|\psi_{i,\ell}\|_{C^{k+1}(M\setminus B_y(\delta),g_\infty)}\Bigr)_i$$
is bounded.
\end{lemma}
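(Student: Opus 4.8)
The plan is to derive the bound from elliptic regularity, using that on the region $M\setminus B_y(\delta)$ the metrics $g_\infty$ and $g_{L_i}$ agree for $i$ large (since $F_{L_i}=F_\infty$ wherever $d(x,y)\geq e^{-L_i}$), and that $\psi_{i,\ell}$ has $L^2$-norm $1$ and satisfies a uniformly bounded eigenvalue equation for $P_\infty$. First I would pass to the region $M\setminus B_y(\delta/2)$ and observe that there $F_{L_i}=F_\infty$ once $e^{-L_i}<\delta/2$, so $\psi_{i,\ell}=\phi_{i,\ell}$ there and $P_\infty\psi_{i,\ell}=h_{i,\ell}\psi_{i,\ell}$ with $h_{i,\ell}=\la_{i,\ell}$ on this set; in particular $|h_{i,\ell}|$ is bounded since $\la_{i,\ell}\to\bar\la_\ell$. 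Thus $\|P_\infty\psi_{i,\ell}\|_{L^2(M\setminus B_y(\delta/2),g_\infty)}\leq C\|\psi_{i,\ell}\|_{L^2}\leq C$.

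Next I would bootstrap. Because $P$ is an elliptic operator of order $k$ with coefficients that are smooth and fixed on the compact set $M\setminus B_y(\delta/2)$ (they are just the coefficients of $P_{g_\infty}$, independent of $i$ there), interior elliptic estimates give, for any nested relatively compact open sets $U\Subset U'$ inside $M\setminus B_y(\delta/2)$,
$$\|\psi_{i,\ell}\|_{H^{s+k}(U)}\leq C\bigl(\|P_\infty\psi_{i,\ell}\|_{H^s(U')}+\|\psi_{i,\ell}\|_{L^2(U')}\bigr).$$
Starting from $\|\psi_{i,\ell}\|_{L^2}\leq 1$ and $\|P_\infty\psi_{i,\ell}\|_{L^2}\leq C$, one gets an $H^k$ bound; feeding $P_\infty\psi_{i,\ell}=h_{i,\ell}\psi_{i,\ell}$ back in gives $\|P_\infty\psi_{i,\ell}\|_{H^k}\leq C$, hence an $H^{2k}$ bound, and iterating finitely many times yields $\|\psi_{i,\ell}\|_{H^N(M\setminus B_y(\delta),g_\infty)}\leq C(N,\delta)$ for any $N$. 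Choosing $N$ large enough that the Sobolev embedding $H^N\hookrightarrow C^{k+1}$ holds on the (fixed, compact) manifold-with-boundary $M\setminus B_y(\delta)$ gives the desired $C^{k+1}$ bound, uniform in $i$.

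The only real point to be careful about is the passage from the $i$-independent interior estimate to a global estimate on $M\setminus B_y(\delta)$: one covers $M\setminus B_y(\delta)$ by finitely many charts each compactly contained in $M\setminus B_y(\delta/2)$, applies the interior estimate in each, and sums. Since $e^{-L_i}<\delta/2$ fails only for finitely many $i$, and those finitely many $\psi_{i,\ell}$ are individually smooth hence bounded in $C^{k+1}(M\setminus B_y(\delta))$, boundedness of the whole sequence follows. I expect no genuine obstacle here — the content is entirely the standard elliptic bootstrap — the thing to state cleanly is merely the fact that the relevant operator on $M\setminus B_y(\delta/2)$ is literally the fixed operator $P_{g_\infty}$, so the elliptic constants do not depend on $i$.
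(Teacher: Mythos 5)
Your proof is correct and follows essentially the same route as the paper's: fix $i$ large so that $F_{L_i}=F_\infty$ on $M\setminus B_y(\delta/2)$, observe that there $P_\infty\psi_{i,\ell}=\la_{i,\ell}\psi_{i,\ell}$ with $\la_{i,\ell}$ bounded, bound the iterated $L^2$-norms of $P_\infty^s\psi_{i,\ell}$ by $(2\bar\la)^s$, invoke interior elliptic regularity on slightly shrinking domains to get uniform $H^{sk}$ bounds, and finish with Sobolev embedding into $C^{k+1}$. The only cosmetic difference is that the paper applies the powers $(P_\infty)^s$ all at once rather than bootstrapping one order at a time, and leaves implicit the observations you spell out about the finitely many exceptional $i$ and the $i$-independence of the elliptic constants.
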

\proofof{the lemma}
After removing finitely many $i$, we can assume that $\la_i\leq 2\bar\la$ and $e^{-L_i}<\delta/2$.
Hence $F_L=F_\infty$ and $h_i=\la_i$ on $M\setminus B_y(\delta/2)$.
Because of
  $$\int_{M\setminus B_y(\delta/2)} |(P_\infty)^s\psi_i|^2\,dv^{g_\infty}
    \leq (2\bar\la)^{2s} \int_{M\setminus B_y(\delta/2)} |\psi_i|^2\,dv^{g_\infty}\leq   (2\bar\la)^{2s}$$
we obtain boundedness of $\psi_i$ in the Sobolev space
$H^{sk}(M\setminus B_y(3\delta/4),g_\infty)$, and hence, for sufficiently
large~$s$ boudnedness in $C^{k+1}(M\setminus B_y(\delta),g_\infty)$.
The lemma is proved.
\qed

Hence after passing to a subsequence $\psi_{i,\ell}$ converges in
$C^{k,\al}(M\setminus B_y(\delta),g_\infty)$ to a solution $\bar\psi_\ell$ of
  $$P_\infty\bar\psi_\ell= \bar\la_\ell\bar\psi_\ell.$$
By taking a diagonal sequence, one can obtain convergence in
$C^{k,\al}_{\rm loc}(M_\infty)$ of $\psi_{i,\ell}$ to $\bar\psi_\ell$.
It remains to prove
that $\bar\psi_1$,\ldots,$\bar\psi_m$ are linearly independent,
in particular that any $\bar\psi_\ell\neq 0$. For this we use the following
lemma.

\begin{lemma}
For any $\ep>0$ there is $\delta_0$ and $i_0$ such that
  $$\Bigl\|\psi_{i,\ell}\Big\|_{L^2(B_y(\delta_0),g_\infty)}
    \leq \ep \Bigl\| \psi_{i,\ell}\Bigr\|_{L^2(M,g_\infty)}$$
for all $i\geq i_0$ and all $\ell\in\{0,\ldots,m\}$.
In particular,
 $$\Bigl\|\psi_{i,\ell}\Big\|_{L^2(M\setminus B_y(\delta_0),g_\infty)}
    \geq (1-\ep)\Bigl\| \psi_{i,\ell}\Bigr\|_{L^2(M,g_\infty)}.$$

\end{lemma}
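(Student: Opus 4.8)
The plan is to show that the $L^2$-mass of $\psi_{i,\ell}$ concentrated near the singular point $y$ is uniformly small. The key point is that near $y$ the metric $g_\infty$ is (up to a controlled conformal factor) the standard cylinder $\mS^{n-1}\times\mR$, on which $P_\infty$ has no spectrum in $(-\si_P,\si_P)$, whereas the $\bar\la_\ell$ lie strictly inside that gap. So heuristically a solution of $P_\infty\psi_{i,\ell}=h_{i,\ell}\psi_{i,\ell}$ with $h_{i,\ell}$ close to $\bar\la_\ell$ cannot carry much mass deep in the cylindrical end; it must decay exponentially as one moves into the nose. I would make this quantitative.

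First I would set up coordinates on the cylindrical region: for $-\log d(x,y)\ge 1$ the metric $g_\infty$ is isometric to $\mS^{n-1}\times[1,\infty)$ with coordinate $t=-\log d(x,y)$, and on the region where $L_i$ is large (say $e^{-L_i}<\delta/2$) we also have $F_{L_i}=F_\infty$ there, so $h_{i,\ell}=\la_{i,\ell}$ and $\psi_{i,\ell}=\phi_{i,\ell}$ pointwise on that region. Thus on the half-cylinder $\mS^{n-1}\times[1,\infty)$ the section $\psi_{i,\ell}$ solves $(P_{\mathrm{cyl}}-\la_{i,\ell})\psi_{i,\ell}=0$ exactly, with $|\la_{i,\ell}|\le\bar\la<\si_P$ for $i$ large. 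Second, I would prove a decay estimate: there exist $C>0$ and $\mu>0$ (depending only on $\si_P-\bar\la$ and on $P$ on the cylinder, via $\si_P$ and the constant from Proposition~\ref{prop.interpol}) such that any $L^2$-solution of $(P_{\mathrm{cyl}}-\la)\,u=0$ on $\mS^{n-1}\times[a,\infty)$ with $|\la|\le\bar\la$ satisfies
$$\|u\|_{L^2(\mS^{n-1}\times[a+s,\infty))}\le C e^{-\mu s}\|u\|_{L^2(\mS^{n-1}\times[a,\infty))}.$$
This is the standard Agmon/Rellich-type estimate on a product cylinder: decompose into eigensections of the cross-sectional operator, or more robustly use the invertibility of $P_{\mathrm{cyl}}-\la$ on $H^k\to L^2$ together with a commutator/Gronwall argument for $\|e^{\mu t}u\|$; the hypothesis that $P$ is invertible on $\mS^{n-1}\times\mR$ and that $\bar\la<\si_P$ is exactly what gives a strictly positive $\mu$.

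Third, I would combine this with the normalization $\|\psi_{i,\ell}\|_{L^2(M_\infty,g_\infty)}=1$. Fix $\ep>0$; choose $s$ so that $Ce^{-\mu s}<\ep$, and then set $\delta_0$ small enough that $B_y(\delta_0)$ is contained in the cylindrical region $\{-\log d(x,y)>1+s\}$, i.e.\ $\delta_0<e^{-(1+s)}$; choose $i_0$ so that $e^{-L_i}<\delta_0/2$ for $i\ge i_0$. Then $B_y(\delta_0)\subset\mS^{n-1}\times[1+s,\infty)$ in the cylinder coordinates, and the decay estimate with $a=1$ gives
$$\|\psi_{i,\ell}\|_{L^2(B_y(\delta_0),g_\infty)}\le\|\psi_{i,\ell}\|_{L^2(\mS^{n-1}\times[1+s,\infty))}\le Ce^{-\mu s}\|\psi_{i,\ell}\|_{L^2(\mS^{n-1}\times[1,\infty))}\le\ep\|\psi_{i,\ell}\|_{L^2(M,g_\infty)},$$
which is the first claimed inequality. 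The second inequality is then immediate from $\|\psi_{i,\ell}\|_{L^2(M,g_\infty)}^2=\|\psi_{i,\ell}\|_{L^2(B_y(\delta_0))}^2+\|\psi_{i,\ell}\|_{L^2(M\setminus B_y(\delta_0))}^2$ and $(1-\ep^2)^{1/2}\ge 1-\ep$. One subtlety to handle carefully: the statement asks for this uniformly in $\ell\in\{0,\dots,m\}$ with a single $\delta_0,i_0$, which is fine since there are finitely many $\ell$ and one takes the worst constants; another is that $\psi_{i,\ell}$ need not be normalized to $1$ in the cylindrical sub-region, but the estimate above is homogeneous in $\psi_{i,\ell}$ so this causes no problem.

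The main obstacle is the exponential decay estimate on the half-cylinder — establishing a uniform $\mu>0$ and constant $C$ valid for all $\la$ with $|\la|\le\bar\la<\si_P$, for a general order-$k$ conformally covariant operator rather than just the Laplacian or Dirac operator. For $k=1,2$ with explicit cross-sectional spectra (as in the Examples) one can argue by separation of variables, but in general one should instead invoke the invertibility hypothesis: $P_{\mathrm{cyl}}-\la$ is uniformly invertible $H^k\to L^2$ for $\la\in[-\bar\la,\bar\la]$, and translation invariance in $t$ plus a standard conjugation-by-$e^{\mu t}$ perturbation argument (valid for $|\mu|$ small depending on $\si_P-\bar\la$) shows $P_{\mathrm{cyl}}-\la$ remains invertible on the weighted space $e^{-\mu t}H^k\to e^{-\mu t}L^2$, forcing any genuine $L^2$-solution of the homogeneous equation to lie in every such weighted space and hence to decay like $e^{-\mu t}$. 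Packaging this cleanly — and making sure the constants depend only on $\bar\la$ and $P$, not on $i$ — is where the real work lies; everything else is bookkeeping with cutoffs and the already-established interior elliptic estimates.
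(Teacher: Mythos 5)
Your strategy — exploit the spectral gap $(-\si_P,\si_P)$ of $P$ on the cylinder to force the $\psi_{i,\ell}$ to have small mass deep in the nose — is the right idea, and it is the same idea that the paper uses. But the implementation has a genuine gap, beyond the ``packaging'' you flag at the end, and two of your setup claims are actually false.

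First, the metric $g_\infty$ near $y$ is \emph{not} isometric to $\mS^{n-1}\times[1,\infty)$. In normal coordinates $g_0 = dr^2 + r^2 h_r$ with $h_r$ a family of metrics on $S^{n-1}$ tending to the round metric only as $r\to 0$, so $g_\infty = d(x,y)^{-2}g_0 = dt^2 + h_{e^{-t}}$ with $t=-\log r$. This is an exact $b$-metric (Lemma~\ref{b-lemma}), i.e.\ asymptotically cylindrical, but the cross-section is genuinely $t$-dependent. Second, and more seriously, $\psi_{i,\ell}$ does \emph{not} satisfy $(P_{\mathrm{cyl}}-\la_{i,\ell})u=0$ on the half-cylinder $t\ge 1$. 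We have $F_{L_i}=F_\infty$ only where $d(x,y)\ge e^{-L_i}$, i.e.\ $t\le L_i$; for $t>L_i$ (which is precisely the cap of the nose, and is part of $B_y(\delta_0)$ for every fixed $\delta_0$ once $i$ is large) the equation is $P_\infty\psi_{i,\ell}=h_{i,\ell}\psi_{i,\ell}$ with $h_{i,\ell}=(F_{L_i}/F_\infty)^k\la_{i,\ell}$ a non-constant function. Your requirement ``$e^{-L_i}<\delta/2$'' makes $F_{L_i}=F_\infty$ on $M\setminus B_y(\delta/2)$, i.e.\ on the \emph{complement} of the region you want to estimate, not on it. Your exponential decay lemma, as stated for a genuine $L^2$-solution of a translation-invariant equation on the whole half-cylinder, therefore simply does not apply to $\psi_{i,\ell}$; to repair it you would have to prove a decay estimate robust to (i) the $t$-dependence of the metric and (ii) the non-constant lower-order term $h_{i,\ell}$, with constants uniform in $i$ — a substantially harder and different statement than the Agmon-type estimate you invoke.

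For comparison, the paper's proof sidesteps exponential decay entirely and gets a logarithmic rate, which is weaker but suffices. It cuts $\psi_{i,\ell}$ off by $1-\chi$ (with $\chi$ the cutoff of Subsection 4.2, $T=-\log\delta$), uses Proposition~\ref{prop.interpol} to control the commutator $\|P_\infty((1-\chi)\psi_{i,\ell})-(1-\chi)P_\infty\psi_{i,\ell}\|\le C/T$, uses the $C^\infty$-convergence of $(B_y(\delta),g_\infty)$ to the cylinder to get the lower bound $\|P_\infty((1-\chi)\psi_{i,\ell})\|\ge(\si_P-\tau(\delta))\|(1-\chi)\psi_{i,\ell}\|$, and uses $|h_{i,\ell}|\le|\la_{i,\ell}|$ pointwise to bound $\|(1-\chi)P_\infty\psi_{i,\ell}\|$. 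Combining these three gives $\|(1-\chi)\psi_{i,\ell}\|\le C/(|\log\delta|(\si_P-\tau(\delta)-|\la_{i,\ell}|))$, which is $<\ep$ for $\delta$ small and $i$ large. This argument is insensitive to the two issues above: it never needs the equation to be translation-invariant, and it uses only the pointwise bound $|h_{i,\ell}|\le\bar\la$, which does hold everywhere. Your exponential-decay approach, if carried out, would yield a stronger conclusion, but establishing it for a general order-$k$ conformally covariant operator on an only-asymptotically cylindrical end with non-constant ``eigenvalue'' is essentially a small research project, whereas the lemma needs only what the paper's softer argument delivers.
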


\proofof{the lemma}
Because of Proposition~\ref{prop.interpol} and
  $$\|P_\infty\psi_{i,\ell}\|_{L^2(M_\infty,g_\infty)}\leq |\bar\la_\ell|\,\|\psi_{i,\ell}\|_{L^2(M_\infty,g_\infty)}= |\bar\la_\ell|$$
we get
  $$\|(\na^\infty)^s \psi_{i,\ell}\|_{L^2(M_\infty,g_\infty)}\leq C$$
for all $s\in\{0,\ldots,k\}$. Let $\chi$ be a cut-off dunction as in
Subsection~\ref{subsec.sup} with $T=-\log \delta$.
Hence
\begin{equation}\label{ineq.comm}
  \| P_\infty \Bigl((1-\chi) \psi_{i,\ell}\Bigr)- (1-\chi) P_\infty( \psi_{i,\ell})\|_{L^2(M_\infty,g_\infty)}\leq \frac{C}{T}= \frac{C}{-\log\de}.
\end{equation}
On the other hand $(B_y(\delta)\setminus\{y\},g_\infty)$
converges for $\delta\to 0$
to $\mS^{n-1}\times(0,\infty)$ in the $C^\infty$-topology.
Hence there is a function $\tau(\delta)$ converging to $0$ such that
\begin{equation}\label{ineq.pinf}
  \| P_\infty \Bigl((1-\chi) \psi_{i,\ell}\Bigr)\|_{L^2(M_\infty,g_\infty)}\geq
  (\si_p-\tau(\delta))\| (1-\chi) \psi_{i,\ell}\|_{L^2(M_\infty,g_\infty)}.
\end{equation}
Using the obvious relation
 $$\|(1-\chi) P_\infty( \psi_{i,\ell})\|_{L^2(M_\infty,g_\infty)}
    \leq |\la_{i,\ell}|\,  \|(1-\chi)\psi_{i,\ell}\|_{L^2(M_\infty,g_\infty)}$$
we obtain with \eref{ineq.comm} and \eref{ineq.pinf}
  $$ \|\psi_{i,\ell}\|_{L^2(B_y(\delta^2),g_\infty)}\leq
  \|(1-\chi)\psi_{i,\ell}\|_{L^2(M_\infty,g_\infty)}\leq \frac{C}{|\log \de|(\si_P-\tau(\delta)-|\la_{i,\ell}|)}.$$
The right hand side is smaller than $\ep$ for $i$ sufficiently large
and $\delta$ sufficiently small. The main statement of the lemma
then follows for $\de_0:=\de^2$.
The Minkowski inequality yields.
 $$\|\psi_{i,\ell}\|_{L^2(M\setminus B_y(\delta^2),g_\infty)}
  \geq  1-\|\psi_{i,\ell}\|_{L^2(B_y(\delta^2),g_\infty)}\geq 1-\ep.$$
\qed

The convergence in $C^1(M\setminus B_y(\delta_0))$ implies strong
convergence in $L^2(M\setminus B_y(\delta_0),g_\infty)$ of $\psi_{i,\ell}$ to
$\bar\psi_\ell$. Hence
  $$\|\bar\psi_\ell\|_{L^2(M\setminus B_y(\delta_0),g_\infty)}\geq 1-\ep,$$
and thus $\|\bar\psi_\ell\|_{L^2(M_\infty,g_\infty)}=1$.
The orthogonality of these sections is provided by the following lemma, and
the inequality~\eref{liminf} then follows immediatly.

\begin{lemma}The sections $\bar\psi_1,\ldots,\bar\psi_m$ are orthogonal.
\end{lemma}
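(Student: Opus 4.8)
The plan is to transfer the $L^2(g_{L_i})$-orthogonality of the eigenvectors $\phi_{i,\ell}$ to an asymptotic $L^2(g_\infty)$-orthogonality of the renormalized sections $\psi_{i,\ell}$, and then pass to the limit using the strong $L^2$-convergence $\psi_{i,\ell}\to\bar\psi_\ell$ already established above. The starting point is the pointwise identity
$$\langle \psi_{i,\ell},\psi_{i,\ell'}\rangle\,dv^{g_\infty}=\left(\frac{F_\infty}{F_{L_i}}\right)^{-k}\langle\phi_{i,\ell},\phi_{i,\ell'}\rangle\,dv^{g_{L_i}},$$
which is the polarized form of the norm computation preceding the first lemma and uses that the fiber identifications in the definition of a conformally covariant operator preserve fiberwise length. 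Wait --- to match the convention of that computation I will actually write the weight as $(F_{L_i}/F_\infty)^{-k}$; note $F_{L_i}\le F_\infty$ everywhere, with equality outside $B_y(e^{-L_i})$, so this weight is $\ge 1$, equals $1$ off $B_y(e^{-L_i})$, and $(F_{L_i}/F_\infty)^{-k}-1\ge 0$ is supported in $\overline{B_y(e^{-L_i})}$.

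First I would integrate this identity over $M$. For $\ell\ne\ell'$ the orthogonality $\int_M\langle\phi_{i,\ell},\phi_{i,\ell'}\rangle\,dv^{g_{L_i}}=0$ gives
$$\langle\psi_{i,\ell},\psi_{i,\ell'}\rangle_{L^2(M_\infty,g_\infty)}=\int_{B_y(e^{-L_i})}\left(\left(\frac{F_{L_i}}{F_\infty}\right)^{-k}-1\right)\langle\phi_{i,\ell},\phi_{i,\ell'}\rangle\,dv^{g_{L_i}}.$$
Since on $B_y(e^{-L_i})$ we have $0\le (F_{L_i}/F_\infty)^{-k}-1\le (F_{L_i}/F_\infty)^{-k}$, the Cauchy--Schwarz inequality together with the same pointwise identity (applied to $\psi_{i,\ell}$ and to $\psi_{i,\ell'}$ separately) yields
$$\bigl|\langle\psi_{i,\ell},\psi_{i,\ell'}\rangle_{L^2(g_\infty)}\bigr|\le\|\psi_{i,\ell}\|_{L^2(B_y(e^{-L_i}),g_\infty)}\,\|\psi_{i,\ell'}\|_{L^2(B_y(e^{-L_i}),g_\infty)}.$$

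Next I would fix $\ep>0$, take $\de_0$ and $i_0$ from the previous lemma (the no-mass-concentration lemma), and enlarge $i_0$ so that $e^{-L_i}\le\de_0$ for $i\ge i_0$; then $B_y(e^{-L_i})\subset B_y(\de_0)$ and each factor above is $\le\ep$, whence $|\langle\psi_{i,\ell},\psi_{i,\ell'}\rangle_{L^2(g_\infty)}|\le\ep^2$ for $i\ge i_0$. As $\ep$ is arbitrary, $\langle\psi_{i,\ell},\psi_{i,\ell'}\rangle_{L^2(M_\infty,g_\infty)}\to 0$ as $i\to\infty$. Finally, as already observed after the previous lemma, the $C^1$-convergence on $M\setminus B_y(\de_0)$ combined with the uniform mass bound there forces $\psi_{i,\ell}\to\bar\psi_\ell$ strongly in $L^2(M_\infty,g_\infty)$ (an $\ep/3$-argument splitting $M_\infty$ into $M\setminus B_y(\de_0)$ and $B_y(\de_0)$). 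Passing to the limit in the inner products then gives $\langle\bar\psi_\ell,\bar\psi_{\ell'}\rangle_{L^2(M_\infty,g_\infty)}=0$ for $\ell\ne\ell'$; since each $\|\bar\psi_\ell\|_{L^2(g_\infty)}=1$, the $\bar\psi_1,\dots,\bar\psi_m$ are orthonormal, hence linearly independent, and \eref{liminf} follows.

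The main obstacle is controlling the error integral over the shrinking ball $B_y(e^{-L_i})$ on which $F_{L_i}$ and $F_\infty$ disagree. The key point making it work is that the nonnegative weight $(F_{L_i}/F_\infty)^{-k}-1$ is dominated by $(F_{L_i}/F_\infty)^{-k}$, which is exactly the Jacobian-type factor converting the $L^2(g_{L_i})$-mass of $\phi_{i,\ell}$ on that ball into the $L^2(g_\infty)$-mass of $\psi_{i,\ell}$ --- a quantity the no-mass-concentration lemma makes uniformly small.
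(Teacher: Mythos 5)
Your proof is correct and relies on the same key ingredients as the paper's: the pointwise conformal weight identity relating $\<\psi_{i,\ell},\psi_{i,\ell'}\>\,dv^{g_\infty}$ to $\<\phi_{i,\ell},\phi_{i,\ell'}\>\,dv^{g_{L_i}}$, Cauchy--Schwarz, the no-mass-concentration lemma, and strong $L^2$-convergence on the exterior of a small ball. The only difference from the paper is cosmetic: the paper splits the $g_{L_i}$-orthogonality into $\int_{M\setminus B_y(\de_0)}+\int_{B_y(\de_0)}$ and estimates the restricted inner product $\int_{M\setminus B_y(\de_0)}\<\psi_{i,\ell},\psi_{i,\ell'}\>\,dv^{g_\infty}$ before passing to the limit and letting $\de_0\to 0$, whereas you exploit the fact that the weight $(F_{L_i}/F_\infty)^{-k}-1$ is supported in the much smaller ball $B_y(e^{-L_i})$ to bound the \emph{full} inner product $\<\psi_{i,\ell},\psi_{i,\ell'}\>_{L^2(M_\infty,g_\infty)}$ directly by $\ep^2$, and then invoke global strong $L^2$-convergence (which, as you note, follows from $C^1$-convergence away from $y$ plus the no-mass estimate near $y$, using also that $\|\bar\psi_\ell\|_{L^2(B_y(\de_0))}$ is small since $\|\bar\psi_\ell\|_{L^2(M\setminus B_y(\de_0))}\ge 1-\ep$ and $\|\bar\psi_\ell\|_{L^2}=1$). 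Either route is fine, and yours is if anything a bit cleaner since the final limit $\de_0\to 0$ becomes unnecessary.
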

\proofof{the lemma}
The sections $\phi_{i,1},\ldots,\phi_{i,\ell}$ are orthogonal.
For any fixed $\delta_0$ (given by the previous lemma),
it follows for sufficiently large $i$ that
\begin{equation}
\begin{split}
\Bigl|\int_{M\setminus B_y(\de_0)} \<\psi_{i,\ell},\psi_{i,\ti\ell}\>\, dv^{g_\infty}\Bigr|
  & = \Bigl |\int_{M\setminus B_y(\de_0)} \<\phi_{i,\ell},\phi_{i,\ti\ell}\>\, dv^{g_{L_i}}\Bigr|\\
  & = \Bigl|\int_{B_y(\de_0)} \<\phi_{i,\ell},\phi_{i,\ti\ell}\>\, dv^{g_{L_i}}\Bigr|\\
  & = \Bigl|\int_{B_y(\de_0)}  \underbrace{\left(\frac{F_{L_i}}{F_\infty}\right)^k}_{\leq 1}
      \<\psi_{i,\ell},\psi_{i,\ti\ell}\>\, dv^{g_\infty}\Bigr|\\
  & \leq \ep^2
\end{split}
\end{equation}
Because of strong $L^2$ convergence on $M\setminus B_y(\de_0)$ this implies
\begin{equation}
\Bigl|\int_{M\setminus B_y(\de_0)} \<\bar\psi_\ell,\bar\psi_{\ti\ell}\>\, dv^{g_\infty}\Bigr|\leq \ep^2
\end{equation}
for $\ti\ell\neq \ell$, 
and hence in the limit $\ep\to 0$ (and $\de_0\to 0$) we get the orthogonality of
 $\bar\psi_1,\ldots,\bar\psi_m$.
\qed


\providecommand{\bysame}{\leavevmode\hbox to3em{\hrulefill}\thinspace}
\providecommand{\MR}{\relax\ifhmode\unskip\space\fi MR }
\providecommand{\MRhref}[2]{%
  \href{http://www.ams.org/mathscinet-getitem?mr=#1}{#2}
}
\providecommand{\href}[2]{#2}

\vspace{1cm}
Authors' address:
\nopagebreak
\vspace{5mm}\\
\parskip0ex
\vtop{
\hsize=9cm\noindent
\obeylines
Bernd Ammann
Institut \'Elie Cartan BP 239
Universit\'e de Nancy 1
54506 Vandoeuvre-l\`es -Nancy Cedex
France
\vspace{0.5cm}
Pierre Jammes
Laboratoire d'analyse non lin\'eaire et g\'eom\'etrie
Universit\'e d'Avignon
33 rue Louis Pasteur
84000 Avignon
France

\vspace{0.5cm}
E-Mail:
{\tt bernd.ammann at gmx.net and pierre.jammes at univ-avignon.fr}         
}

\end{document}